\documentclass{article}
\usepackage{amssymb, amsfonts, latexsym, amsmath, amsthm, verbatim, hyperref}
\usepackage[pdftex]{color}

\input{rafel-la.def}


\newtheorem{thm}{Theorem}[section]
\newtheorem{cor}[thm]{Corollary}
\newtheorem{lem}[thm]{Lemma}
\newtheorem{fets}[thm]{Facts}
\newtheorem{fet}[thm]{Fact}
\newtheorem{claim}[thm]{Claim}
\newtheorem{prop}[thm]{Proposition}

\newtheorem{con}[thm]{Conjecture}
\theoremstyle{definition}
\newtheorem{defn}[thm]{Definition}
\theoremstyle{definition}

\theoremstyle{remark}
\newtheorem{rem}[thm]{Remark}

\numberwithin{equation}{section}

\numberwithin{equation}{section}


\newcommand{\lcm}{\operatorname{lcm}}
\newcommand{\dpr}{\operatorname{dp-rk}}
\newcommand{\dpra}{\textrm{dp-rank}}
\newcommand{\bdn}{\operatorname{bdn}}
\newcommand{\vc}{\operatorname{vc}}
\newcommand{\mui}{mutually indiscernible}
\newcommand{\muy}{mutual indiscernibility}
\newcommand{\qe}{Quantifier Elimination}
\newcommand{\dimp}{\operatorname{\dim_p}}
\newcommand{\rjn}{\operatorname{RJ_{n}}}
\newcommand{\spn}{\operatorname{Sp_{n}}}
\newcommand{\rjp}{\operatorname{RJ_{p}}}
\newcommand{\rj}{\operatorname{RJ}}
\newcommand{\rjpi}{\operatorname{RJ_{p}^{\infty}}}


\title{Strong  \oag s and dp-rank}
\author{ Rafel Farr\'{e}\\
Universitat Polit\`{e}cnica de Catalunya  \\
Departament de Matem\`{a}tiques  \\
C/Jordi Girona, 1-3 Edifici Omega,\\
  E-08034 Barcelona, \\
Spain
}
\date{\today}%

\begin{document}

\maketitle
\begin{abstract}
We provide an algebraic characterization of strong \oag s: An \oag\ is strong iff it has bounded regular rank and almost finite dimension.
Moreover, we show that any strong \oag\ has finite Dp-rank.

We also provide a formula that computes the exact valued of the Dp-rank of any \oag. In particular characterizing those \oag s with Dp-rank equal to $n$. We also show the Dp-rank coincides with the Vapnik-Chervonenkis density.
\end{abstract}
\newpage
\tableofcontents \vfill

\section{Introduction}

 S. Shelah  asked the question of which \oag s are strongly dependent (a strong form of NIP). H. Adler  introduced the notion of \emph{strong} theory (a strong form of NTP$_2$) which is a generalization of \emph{strongly dependent} and coincides with the former in NIP theories. In other words, a theory (or more generally a type) is strongly dependent iff it is stong and NIP. Since it is well-known that all \oag s are NIP, the question can be stated as: which \oag s are strong?

One of the main results of the paper is Theorem \ref{thm: caracteritzacio strong}, a characterization of strong \oag s
showing that an \oag\ is strong iff it has bounded regular rank and almost finite dimension. See sections \ref{sec: EQ}   for the definition of bounded regular rank and section \ref{sec: Strong oag s} for the definition of almost finite dimension.

As a corollary we obtain that any strong \oag\ has finite \dpra.
Moreover we provide a formula that computes the exact value of the \dpra.

The paper is organized as follows.

In section \ref{sec: EQ} we state and prove Theorem~\ref{thm: QE}, a \qe\  for \oag s with bounded regular rank. This QE will be applied in sections \ref{sec: dp-minimal ordered groups} and \ref{sec: Strong oag s}. We use a relative QE result of Gurevich and Schmitt to prove our result. The notation and statement of the relative  QE are in section \ref{sec: Gurevich-Schmitt QE for OAG}.

In section \ref{sec: computing dpra} we introduce a variation of ict-patterns by relaxing a little bit the conditions on an ict-pattern. This will be useful to obtain upper bounds of \dpra\ in Theorem \ref{thm: cota superior Dp-rang, cas strong}. 
We call this new patterns wict-patterns (weak independence partition pattern). We will see that wict-patterns compute \dpra. This is the content of Proposition \ref{prop: wict and special patterns compute dp-rank}. The nice point is that when there is QE in the language it is enough to consider wict-patterns constituted by literals (atomic or negation of atomic formulas). This is basically the content of Propositions \ref{prop: no disjunctions} and \ref{prop: no conjunctions}.  Ict-patterns do not have this property since Proposition \ref{prop: no conjunctions} fails for ict-patterns.

We also show in Proposition \ref{prop: single-row} that directed families of formulas cannot occur in two different rows of the same pattern. this is also very useful in order to provide upper bounds on the \dpra.

In section \ref{sec: dp-minimal ordered groups} we provide a characterization of dp-minimal \oag s. The arguments of this section are the same as those in sections \ref{sec: Strong oag s}. Moreover the results here are a corollary as those in section \ref{sec: Strong oag s}. This section is just a warming-up of  section \ref{sec: Strong oag s}.

The algebraic characterization of a dp-minimal \oag\ as having finite dimension has been first published in \cite{jahnke_simon_walsberg_2017} using another terminology.

In section \ref{sec: Chain conditions} we prove Propositions \ref{prop: cota inferior burden en grups, cas finit} and \ref{prop: cota inferior burden en grups, cas numerable}. They may be considered as chain conditions Theorems, see \cite{Kaplan-Shelah:Chain2013} for close results. They are useful in proving lower bounds for burden (and \dpra). In particular \ref{prop: cota inferior burden en grups, cas numerable} is useful to obtain consequences from strongness. It is used in \ref{prop: infinite p-dim=inf implica no strong} and \ref{thm: p-regular rank infinit implica no strong}. Propositions \ref{prop: cota inferior burden en grups, cas finit} is used in section \ref{sec: computing dpra in oag}.

Section \ref{sec: Strong oag s} contains Theorem \ref{thm: caracteritzacio strong}, the characterization of strong \oag s. It also contains the proof that all strong \oag s have finite \dpra.

Section \ref{sec: computing dpra in oag} is a refinement of the arguments of section 6 to obtain a formula in Theorem~\ref{thm: formula Dp-rang, cas RR acotat} that computes the exact value of the \dpra.

The characterisation of Strong \oag s as having bounded regular rank and almost finite dimension is obtained independently in \cite{Hal-Has2017:StronglyDependentOAGandHenselianFields} using another terminology and different proof techniques. The authors also prove in \cite{Hal-Has2017:StronglyDependentOAGandHenselianFields}  that any strong \oag\ has finite \dpra. They
 also provide a formula similar to \ref{thm: formula Dp-rang, cas RR acotat}. Corollary \ref{cor: formula Dp-rank d'una suma directa de OAGs} is also there.

 Section~\ref{sec: VC-density} contains a proof that for \oag s, the VC-density coincides with the \dpra. More precisely Theorem~\ref{thm: Dp-rank = Vc-densitat} shows that the \dpra\ coincides with the VC-density function evaluated at $1$.

Most of the results of this paper (sections \ref{sec: EQ} , \ref{sec: computing dpra}, \ref{sec: dp-minimal ordered groups} ,\ref{sec: Chain conditions}, \ref{sec: Strong oag s} and \ref{sec: computing dpra in oag} )were obtained in 2014, but they remained unpublished until now. They were  exposed in detail in the Barcelona Model theory seminar along various sessions during March and April 2014 (see \url{http://www.ub.edu/modeltheory/mt.htm}). I also gave a talk about the subject in the Mathematical Logic Seminar in Freiburg the 25th. of June 2014 (see
\url{http://logik.mathematik.uni-freiburg.de/lehre/archiv/ss14/oberseminar-ss14.html
}).
\subsection{notation}

We use  $\ov x$ to denote a tuple of variables. Most of the time is finite but sometimes could be infinite. 
We denote by $\abs{\ov x}$ the length of the tuple. We use $x$ to denote a single variable. The same conventional notation will be used for tuples of elements.

We will denote by $\PP$ the set of all prime numbers.

We always play with the monster model of some complete theory $T$. But in fact a little bit of saturation (a weakly saturated model) is enough.

The notation an terminology specific for \oag s is contained in sections \ref{sec: EQ} and \ref{sec: Gurevich-Schmitt QE for OAG}.

When we say that a structure  is VCA-minimal, dp-minimal, strong (or in general any abstract property of theories) we mean the theory of the structure is VCA-minimal, dp-minimal, strong... In practice we always may assume the structure is as much saturated as needed (by replacing it by another elementarily equivalent). The same applies to invariants of a theory, like $\dpr(T)$, $\bdn(T)$... In this case we denote  $\dpr(M)=\dpr(\operatorname{Th}(M))$ and similarly by $\bdn$ and VC-density.

When dealing with  \dpra\ and burden there two possible ways to define them, both common in the literature. 
The first possibility is to define the \dpra\ of a type as the supremum of all depths of all ict-patterns for the type. The second one is Shelah's style: the the \dpra\ of a type is the first cardinal $\ka$ for which there is no ict-patterns for the type of depth $\ka$ (and $\infty$ if such cardinal does not exists).
We choose the first definition, although we do not use subscripts to distinguish whether those supremums are achieved or not.

If $\vf(\ov x,\ov y)$ is a formula, $M$ is a model and $\ov b$ is a tuple in $M$ of the same length as $\ov y$, we will use $\vf(M,\ov b)$ to denote the set defined by $\vf(\ov x,\ov b)$ in $M$, namely $\set{\ov a\in M^{\abs{\ov x}} \mid M\models \vf(\ov a,\ov b)}.$

If $G$ is an \oag\ we will denote $\De\unlhd G$ to indicate that $\De$ is a convex subgroup of $G$ and $\De\lhd G$ that $\De$ is a proper convex subgroup.

\section{A Quantifier Elimination  for \oag s with bounded regular rank}\label{sec: EQ}

In this section we generalize a quantifier elimination result of W. Weispfenning in \cite{Wei81}. Weispfenning result is precisely our theorem \ref{thm: QE} in the case when $G$ has finite regular rank, see the comments after proposition \ref{prop: caracteritz. de regular rank bounded}. We deduce our result from a more general QE of Y. Gurevich and P. Schmitt, see \cite{Gu65}, \cite{Schm82a}, \cite{Schm82b}. This more general result, the notation and the terminology is explained in section \ref{sec: Gurevich-Schmitt QE for OAG} of this paper. If $p$ is a prime number, $G/pG$ is a vector space over the field with $p$ elements. We call the dimension of this vector space the \textbf{$p$-dimension} of $G$ and we denote it by $\dimp(G)$.

We begin with a definition.
\begin{defn}
Let $n\ge 2$. The $n$-regular rank of $G$ is the order-type of the following ordered set:
$$
\left(\rjn(G), \se \right),
$$
where $\rjn(G)$ denotes $\set{A_{n}(g) \mid g\in G-\set{0}}$, the set of all $n$-regular jumps.
\end{defn}
The $n$-regular rank of an \oag\ $G$ is a linear order. When it is finite we can identify this order with its cardinal. Moreover it can be easily characterized without any reference to the sets $A_n(g)$ as next remark shows.
\begin{rem}\label{rem: rang n-regular finit}
\begin{enumerate}
  \item $G$ has $n$-regular rank $0$ iff $G=\set{0}$.
  \item  $G$ has $n$-regular rank $1$ iff $G$ is $n$-regular and non-trivial.
  \item  $G$ has $n$-regular rank equal to  $m$ iff there are $\tiraa {\De}0m$ convex subgroups of $G$, such that:
      \begin{enumerate}
       \item $\set{0}=\De_0\lhd\De_1\lhd \cdots \lhd\De_m=G$
       \item $\De_{i+1}/\De_i$ is $n$-regular for $0\le i<m$
       \item $\De_{i+1}/\De_i$ is not  $n$-divisible for $0< i<m$.
      \end{enumerate}
      In this case $\rjn(G)=\set{\tiraa{\De}0{{m-1}}}$.
  \item It $G$ has finite $n$-regular rank and $H\equiv G$ then $H$ has the same $n$-regular rank as $G$. Hence, when finite, the $n$-regular rank is an invariant of the theory of $G$. If $G$ has infinite $n$-regular rank, the regular rank of a $\ka$-saturated model of the theory of $G$ is a linear order which has cardinality at least $\ka$. It could be interesting characterize the $n$-regular rank of the monster model (it is not an $\eta_\al$-ordered set in general).
  \item If $\dim_p(G)$ is finite then the $p$-regular rank of $G$  is at most $\dimp(G)+1$.
  \item The number of convex subgroups $\De$ of $G$ with $G/\De$ discrete is bounded by the $n$-regular rank for each $n$. In particular, if $G$ has finite $n$-regular rank for some $n$, then the number of convex subgroups $\De$ of $G$ with $G/\De$ discrete is finite.
  \item If the $n$-regular rank of $G$ is finite then each convex subgroup of the form $F_{n}(x)$ belongs to $\rjn(G)$.
\end{enumerate}
\end{rem}
\begin{proof}
\emph{1} and \emph{2} are particular cases of \emph{3},  and \emph{5} follows from \emph{3}.

\emph{3}. If we have such a chain then $\rjn(G)=\set{\tiraa{\De}0{{m-1}}}$, since $g\in\De_{i+1}-\De_i$ implies $A_{n}(g)=\De_i$.

\emph{4}. $G$ has $n$-regular rank m iff in $G$ holds:
$$
\exists \tiraa y0{{m-1}}\forall z\left(\bigvee_{j=0}^{m-1} A_n(z)=A_n(y_j)\land  \bigwedge_{j=0}^{m-2}A_n(y_j)\subset A_n(y_{j+1})  \right)
$$

\emph{6}. Because $G/\De$ discrete with \fpe\ $1_\De$ implies  $\De=A_n(1_\De)$ and thus $\De\in \rjn(G)$ for any $n\ge2$.

\emph{7}.  Assume $\rjn(G)=\set{ \tiraa {\De}0{{m-1}} }$ as in \emph{3} and  $\De_i\lhd F_{n}(x) \lhd \De_{i+1}$.  Then $x\in \De_{i+1}+nG$. By regularity of the jump, $\De_{i+1}/F_{n}(x)$ is $n$-divisible, thus $\De_{i+1}=F_{n}(x)+n\De_{i+1}$. Then $x\in F_{n}(x)+nG$, a contradiction.
\end{proof}
We also denote by $\rj(G)$ the set $\bigcup_{n\ge 2}\rjn(G)$ an call it the set of regular jumps of $G$.

\begin{prop}\label{prop: caracteritz. de regular rank bounded}
Let $G$ be an \oag. The following are equivalent:
\begin{enumerate}
  \item  $G$ has finite $p$-regular rank for each prime $p$.
  \item  $G$ has finite $n$-regular rank for each $n\ge 2$.
   \item There is some cardinal $\kappa$ such that for any $H\equiv G$, $\abs{\rj(H)}\le \ka$ ($\rj(H)$ is countable).
   \item For any $H\equiv G$, any definable convex subgroup of $H$ has a definition without parameters.
  \item  There is some cardinal $\kappa$ such that for any $H\equiv G$, $H$  has at most $\ka$  (countably many) definable  convex subgroups.
\end{enumerate}
Moreover, in this case, $\rj(G)$ is the collection of all proper definable convex subgroups of $G$ and all are definable without parameters.
\end{prop}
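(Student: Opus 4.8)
The plan is to run the cycle $(2)\Rightarrow(1)$, $(1)\Rightarrow(2)$, then $(2)\Rightarrow(4)\Rightarrow(5)\Rightarrow(3)\Rightarrow(2)$, and to read the ``moreover'' clause off the proof of $(2)\Rightarrow(4)$. Several links are routine. The implication $(2)\Rightarrow(1)$ is immediate, since primes are among the $n\ge 2$. For $(1)\Rightarrow(2)$ I fix $n\ge 2$ with prime divisors $p_1,\dots,p_r$ and invoke the standard relationship between $n$-regularity and $p$-regularity (the $n$-regular filtration is controlled by the prime filtrations for $p\mid n$, together with the fact that finite $p$-regular rank forces finite $p^k$-regular rank): finiteness of the finitely many $\operatorname{RJ}_{p_j}(G)$ then bounds $\abs{\rjn(G)}$, so $\rjn(G)$ is finite. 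This is a routine but slightly technical verification. For $(4)\Rightarrow(5)$: in a countable language there are only countably many $\buit$-definable sets, so if every definable convex subgroup is $\buit$-definable there are at most countably many of them. For $(5)\Rightarrow(3)$: each regular jump $A_n(g)$ is a convex subgroup definable with the parameter $g$ (uniformly in $g$), so $\rj(H)$ injects into the set of definable convex subgroups of $H$ and the cardinal bound transfers. For $(3)\Rightarrow(2)$: if some $n$-regular rank were infinite, then by Remark~\ref{rem: rang n-regular finit}(4) a $\ka^+$-saturated $H\equiv G$ would satisfy $\abs{\rjn(H)}\ge\ka^+>\ka$, contradicting $(3)$; hence all $n$-regular ranks are finite.

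The heart of the argument is $(2)\Rightarrow(4)$, which I would split into two parts. First, assuming finite $n$-regular ranks, every element of $\rj(H)$ is $\buit$-definable. By Remark~\ref{rem: rang n-regular finit}(3), for each $n$ the set $\rjn(H)=\set{\De_0,\dots,\De_{m_n-1}}$ is a finite chain of convex subgroups with $\De_0=\ze$; the \fo\ formula of Remark~\ref{rem: rang n-regular finit}(4) pins down the whole chain, and each $\De_i$ is then isolated without parameters as the convex subgroup having exactly $i$ regular jumps strictly below it, a condition expressible precisely because the total number $m_n$ is finite and fixed. Thus each $\De_i$ is $\buit$-definable, and since $\rj(H)=\bigcup_{n\ge 2}\rjn(H)$ is a countable union of such finite $\buit$-definable chains, every regular jump is $\buit$-definable.

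The second part, and the main obstacle, is to show that every proper definable convex subgroup $C$ of $H$ lies in $\rj(H)$. Here I would use the description of definable convex subgroups coming from the Gurevich--Schmitt analysis of \oags: every definable convex subgroup is generated from the basic subgroups $F_n(g)$ and $A_n(g)$ by the lattice operations (unions and intersections of chains). Under the bounded-rank hypothesis there are no nontrivial chain-limits to take: by Remark~\ref{rem: rang n-regular finit}(7) each $F_n(g)$ already lies in the finite set $\rjn(H)$, and each $A_n(g)$ lies there by definition, so the generated lattice collapses to $\rj(H)\cup\set{H}$. As $C$ is proper, this gives $C\in\rj(H)$, which completes $(2)\Rightarrow(4)$ and, at the same time, the ``moreover'' clause: $\rj(H)$ is exactly the set of proper definable convex subgroups, each of them $\buit$-definable (every regular jump is proper since $\rjn=\set{\De_0,\dots,\De_{m_n-1}}$ omits $\De_{m_n}=H$). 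The delicate point, which I expect to be where the real work lies, is this reduction of an \emph{arbitrary} definable convex subgroup to the basic subgroups $F_n,A_n$: it must be genuinely established rather than merely asserted, and it is false without bounded rank, since for infinite $n$-regular rank the subgroups $A_n(g)$ already produce continuum-many distinct definable convex subgroups, which cannot all be $\buit$-definable.
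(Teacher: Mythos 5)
Your proposal follows essentially the same route as the paper, just with the implications reordered: the paper runs the cycle (1)$\Rightarrow$(2)$\Rightarrow$(3)$\Rightarrow$(4)$\Rightarrow$(5)$\Rightarrow$(1), using exactly the ingredients you use --- the inclusion $\rjn(G)\se\bigcup_{p\mid n}\rjp(G)$ (coming from $A_n(g)=\bigcup_{p\mid n}A_p(g)$, Facts~\ref{fets: A}) for the equivalence of (1) and (2), the transfer of finite $n$-regular rank across elementary equivalence (Remark~\ref{rem: rang n-regular finit}(4)), a compactness/saturation argument to rule out infinite rank under a cardinal bound, and the same parameter-free definition of each $\De_i\in\rjn(H)$ as the convex subgroup with exactly $i$ jumps strictly below it in the finite chain.

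The one substantive point concerns the step you yourself flag as ``the real work'': that every proper definable convex subgroup of $H$ lies in $\rj(H)$. The paper does not establish this from scratch either; it cites Theorem 4.1 of \cite{De-Fa96} (Delon--Farr\'e), which states that any proper definable convex subgroup of $H$ is an intersection of elements of $\rjn(H)$ for some $n$; since $\rjn(H)$ is a finite chain, such an intersection is itself a member of $\rjn(H)$, which is your ``collapse.'' So your assertion is correct in substance, but its provenance is the Delon--Farr\'e theorem rather than the Gurevich--Schmitt analysis per se, and as written your argument has a hole exactly where the paper inserts this citation: the reduction of an arbitrary definable convex subgroup to the subgroups $A_n,F_n$ is a genuine theorem, not something that falls out of the quantifier-elimination machinery quoted in this paper. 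With that reference supplied, your proof is complete and matches the paper's; without it, the crux remains an unproved assertion, exactly as you suspected.
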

\begin{proof}
\emph{1}$\Rightarrow$\emph{2}.
$\rjn(G)\se\bigcup_{p\mid n}\rjp(G)$ because $A_{n}(g)=\bigcup_{p\mid n}A_p(g)$.

\emph{2}$\Rightarrow$\emph{3}. Let $H\equiv G$.  By Point 4 in Remark \ref{rem: rang n-regular finit} $H$ has also finite $n$-regular rank for each $n$ so $\rj(H)$ is countable.

\emph{3}$\Rightarrow$\emph{4} Let $H\equiv G$. A compactness argument shows that $H$ must have finite $n$-regular rank for each $n$. By Theorem 4.1 of  \cite{De-Fa96} any proper definable convex subgroup of $H$ is an intersection of elements of $\rjn(H)$ for some $n$. The finiteness of $\rjn(H)$ implies that $\rj(H)$ is the set of all proper definable convex subgroups of $H$. Now, if $\rjn(H)=\set{\tiraa{\Delta}0{{m-1}}}$ and $\De_i=A_n(g_i)$ then $\De_i$ can be defined in $H$ by the formula
$$
\exists \tiraa y0{{m-1}}\left(  x\in A_n(y_i)\land  \bigwedge_{j=0}^{m-2}A_n(y_j)\subset A_n(y_{j+1})  \right)
$$

\emph{4}$\Rightarrow$\emph{5} is obvious.

\emph{5}$\Rightarrow$\emph{1} If for some $p$, $\rjp(G)$ is infinite, a compactness argument allows us to find $H\equiv G$ where $\rjp(H)$ is as big as we want.

Finally observe that $\rjn(G)=\bigcup_{p\in\PP}\rjp(G)$.
\end{proof}

In view of point \emph{3} of Proposition \ref{prop: caracteritz. de regular rank bounded} will say that an \oag\ has \textbf{bounded regular rank } if it satisfies any of the conditions of Proposition \ref{prop: caracteritz. de regular rank bounded}. By Facts~\ref{fets: A}.\emph{7}, $\rjn=\spn$ when $\rjn$ is finite. Therefore an \oag\ has bounded regular rank iff all the spines $\spn(G)$ are finite.
In this case we define the \textbf{regular rank} of $G$ as $\abs{\rj(G)}$, the cardinality of $\rj(G)$. It is either finite or $\aleph_0$. Observe that the property of having finite or bounded regular rank depends only in the theory of $G$. Also the value of the regular rank  depends only on the theory (we may say that the regular rank is $\infty$ or non-defined when the group has not bounded regular rank). 


One can easily check that Weispfenning QE  in \cite{Wei81}  is just Theorem \ref{thm: QE} in the case of finite regular rank.
\begin{thm}\label{thm: QE}
Let $G$ be an \oag\ with bounded regular rank. Then $G$ admits QE in the following language:
\begin{align}\label{eq: languageQE}
L=&    \set{+,-,0,\le} \ \cup \   \set{1_\De \mid \De\in\rj(G),\ G/\De \text{\textrm{ discrete}} } \ \cup    \nonumber \\
&    \set{x\equiv y \mod \De \mid \De\in\rj(G)}  \ \cup \          \nonumber                \\
&     \set{x\equiv y \mod(\De+p^mG) \mid p\in\PP,\ \De\in\rjp(G),\ m\ge 1}    \nonumber
  \end{align}
Here, if $G/\De$ is discrete, $1_\De$ denotes an element of $G$ whose projection to $G/\De$ is the smallest positive element.
\end{thm}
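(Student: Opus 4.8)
The plan is to deduce Theorem~\ref{thm: QE} from the relative QE of Gurevich and Schmitt recalled in section~\ref{sec: Gurevich-Schmitt QE for OAG}, using the hypothesis of bounded regular rank to collapse the auxiliary (spine) sorts to finite, $\emptyset$-definable data. Recall that the Gurevich--Schmitt result eliminates quantifiers in the main group sort \emph{relative} to a system of auxiliary sorts built from the spines $\rjn(G)$, the interaction being mediated by the maps $g\mapsto A_n(g)$ sending a nonzero group element to its $n$-regular jump. After applying that theorem, every formula becomes equivalent to a Boolean combination of quantifier-free main-sort conditions and formulas whose quantifiers range only over the auxiliary sorts. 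The whole point of the bounded-regular-rank hypothesis is that those auxiliary sorts carry no surviving quantifier complexity.

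First I would invoke Proposition~\ref{prop: caracteritz. de regular rank bounded}: bounded regular rank means every spine $\rjn(G)$ is finite, so $\rj(G)$ is at most countable, and moreover each of its elements is a definable convex subgroup admitting a \emph{parameter-free} definition. Since any given formula mentions only finitely many moduli $n$, only finitely many (each finite) spines are relevant to it, and each becomes a named finite linear order: a quantifier $\exists s$ ranging over a spine can be replaced by a finite disjunction over the $\emptyset$-definable elements $\De\in\rjn(G)$, and any atomic relation among spine elements (their order, which quotients $\De_{i+1}/\De_i$ are discrete, which jumps are $n$-divisible) becomes a truth value fixed by the theory. This removes all auxiliary-sort quantifiers and reduces matters to checking that the remaining main-sort primitives — those recording, for a tuple of group variables, the position of each $A_n$-image among the finitely many named $\De$ — are expressible quantifier-free in the language~$L$.

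The heart of the translation is then to match the surviving Gurevich--Schmitt main-sort primitives to the three families of symbols in $L$. Knowing where $g$ sits relative to a named convex subgroup $\De\in\rj(G)$ is exactly a condition $x\equiv y \bmod \De$, so these symbols handle membership in, and cosets of, the regular jumps. The finer $p$-adic information attached to a jump $\De\in\rjp(G)$ — in which coset of $\De+p^mG$ an element lies — is captured by the symbols $x\equiv y \bmod (\De+p^mG)$; since the spine is finite, these finitely many congruences pin down the $p$-divisibility type of any element modulo each jump. Finally, whenever $G/\De$ is discrete the quotient has a $\Z$-like successor structure, and to eliminate quantifiers over it one must name the smallest positive element; this is precisely the role of the constants $1_\De$. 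Assembling these, each primitive is equivalent to a quantifier-free $L$-formula, which yields QE.

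The hard part will be the bookkeeping of this last step: verifying that the Gurevich--Schmitt auxiliary predicates (discreteness, $n$-regularity, and $n$-divisibility of successive jumps) translate, under finiteness of the spines, into conditions realizable by the specific congruences and constants of $L$, and in particular that the discrete-quotient successor relation is handled uniformly by the $1_\De$ together with the congruences modulo $\De+p^mG$. This is exactly where the argument specializes to Weispfenning's QE in~\cite{Wei81} when the regular rank is finite, so the real work is to carry his matching of primitives through the (still finite, but possibly larger in number) spines permitted by merely bounded regular rank.
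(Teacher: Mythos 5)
Your strategy is exactly the paper's: apply the Gurevich--Schmitt relative QE (Theorem~\ref{thm: B}), use bounded regular rank to make each relevant spine $\spn(G)$ a finite structure all of whose elements are $\emptyset$-definable (point 7 of Remark~\ref{rem: rang n-regular finit} together with Proposition~\ref{prop: caracteritz. de regular rank bounded}), so that the spine-sort part of a formula collapses to a Boolean combination of conditions $A_{n}(t_i(\ov y))=\De$ and $F_{n}(s_i(\ov y))=\De$ with $\De\in\rjn(G)$, and then translate what survives into the language $L$. But your proposal stops at precisely the point where the content of the theorem lies: you label the translation of the surviving primitives ``bookkeeping'' and ``the hard part'' without carrying it out. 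The paper's proof consists almost entirely of these explicit equivalences: $A_{n}(x)=\De$ iff $(x\not\equiv 0 \bmod \De)\wedge(x\equiv 0\bmod \De^{n+})$, and $F_{n}(x)=\De$ iff $(x\not\equiv 0\bmod(\De+nG))\wedge(x\equiv 0\bmod(\De^{n+}+nG))$, where $\De^{n+}$ is the successor in the finite spine (this uses the fact that every $F_{n}(x)$ lies in $\rjn(G)$ when the rank is finite); and then the translations of the LOG$^*$ predicates $M_k$, $E_{(n,k)}$, $D_{(p,r,i)}$ via the constants $1_\De$ and congruences, which require identifying $\Ga_{1,n}(x)$ and $\Ga_{2,n}(x)$ as $\De+nG$ and $\De^{n+}+nG$ respectively. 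None of this is in your text, and it is not routine; your sketch also conflates the spine-level predicates (discreteness, divisibility of jumps), whose truth values are indeed fixed by the theory, with the main-sort LOG$^*$ predicates, which genuinely depend on the group element and must be rewritten.

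There is also a step you omit entirely and which any such translation needs before it can even land in $L$: the language only contains congruences modulo $\De+p^mG$ for $\De\in\rjp(G)$ and prime powers $p^m$, whereas the natural translations produce congruences modulo $\De+nG$ with $n$ composite and with $\De\in\rjn(G)$, which need not lie in $\rjp(G)$ for every $p\mid n$. The paper disposes of this with a preliminary claim: a B\'ezout/lcm argument showing that $x\equiv y\bmod(\De+nG)$ is equivalent to the conjunction of the corresponding prime-power congruences, plus the observation that if $\De\notin\rjp(G)$ then $\De+p^rG=\De_2+p^rG$, where $\De_2$ is the next element of $\rjp(G)\cup\set{G}$ above $\De$ (because $\De_2/\De$ is $p$-divisible). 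Without this reduction, the finite-disjunction collapse you describe produces formulas outside $L$, so the argument as sketched does not close.
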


\begin{proof}
We begin by remarking that we can add the following predicates for free:
\begin{equation}\label{eq: widelanguageQER}
      \set{x\equiv y \mod(\De+nG) \mid \De\in\rj(G), \ n\ge 1}  \nonumber
\end{equation}
In fact we could have added any set of predicates of the form $$x\equiv y \mod(\De+nG)$$ even if $\De$ is not definable. This is because:
\begin{claim}
\begin{enumerate}
  \item If $n=p_1^{r_1}\cdots p_k^{r_k}$ then
\begin{equation}\nonumber
      x\equiv y \mod(\De+nG) \Longleftrightarrow  \bigwedge_{i}  x\equiv y \mod(\De+p_i^{r_i}G)
\end{equation}
    \item If $\De\notin\rjp(G)\cup\set{G}$, let $\De_1,\De_2$ be consecutive elements of $\rjp(G)\cup\set{G}$ such that $\De_1\subset \De\subset\De_2$. Then
\begin{equation}\nonumber
      x\equiv y \mod(\De+p^rG) \Longleftrightarrow  x\equiv y \mod(\De_2+p^rG)
\end{equation}
\end{enumerate}
\end{claim}

\noindent\emph{Proof of the claim:}
\emph{1} follows from the following formula:
$$
\ds \bigcap_{i=1}^t\De+m_iG=\De+\lcm(\tira mt)G.
$$
 To prove this
it is enough to prove $(\De+nG)\cap (\De+mG)=\De+\lcm(n,m)G$. The other inclusion being obvious, it suffices to prove $\subseteq$. Assume $a=\de_1+mg_1=\de_2+ng_2$ with $\de_i\in \De$ and $g_i\in G$. Denote $d=\gcd(m,n)$, $m=dm'$, $n=dn'$. Let $1=\la m'+\mu n'$ be a B\'{e}zout identity for $n',m'$. Then $a=\la m'a+\mu n'a=\la m'(\de_2+ng_2)+\mu n'(\de_1+mg_1)=(\la m'\de_2+\mu n'\de_1)+\lcm(n,m)(\la g_2+\mu g_1)\in \De+\lcm(n,m)G.$

\emph{2} follows because $\De_2/\De$ $p$-divisible implies $\De_2= \De+p^r\De_2$ and thus $\De+p^rG=\De_2+p^rG $.
\emph{This ends the proof of the claim.}

We will use theorem \ref{thm: B}. Let $n$ be as in the statement of  \ref{thm: B}. Since $\rjn(G)$ is the domain of $\operatorname{Sp_{n}}(G)$ and is finite ,
$\operatorname{Sp_{n}}(G)\models\psi_0(C_1,\dots,C_m,D_1,\dots,D_r)$ is equivalent to a boolean combination of the following formulas
$A_{n}(t_i(\ov g))=\De$ and $F_{n}(s_i(\ov g))=\De$ with $\De\in\rjn(G)\se \bigcup_{p\mid n}\rjp(G)$. The following claim shows that this has a quantifier-free translation into the language $L$:
\begin{claim}
\begin{enumerate}
  \item
$$  
      A_{n}(x)=\De \Longleftrightarrow  (x\not \equiv 0 \mod\De) \wedge   (x\equiv 0 \mod\De^{n+}  )
$$
    \item
$$
       F_{n}(x)=\De \Longleftrightarrow  (x\not \equiv 0 \mod(\De+nG)) \wedge  ( x \equiv 0 \mod(\De^{n+}+nG)),
$$
\end{enumerate}
where $\De^{n+}$ denotes the successor of $\De$ in $\rjn(G)$.
\end{claim}
It remains to prove that the LOG$^*$-predicates  $M_k$, $E_{(n,k)}$ and $D_{(p,r,i)}$ are expressible without quantifiers in $L$.
This is is done in the next claim. This ends the proof of the theorem.
\begin{claim}
\begin{enumerate}
  \item $\ds M_k(x)  \Longleftrightarrow \bigvee_{G/\De\text{ discrete}}   \!\!\!  x\equiv k 1_\De\mod \De$
  \item $\ds E_{(n,k)}(x)  \Longleftrightarrow   \!\!\!\!\!\!\!\!\!\!\!  \bigvee_{G/\De\text{ discrete}}  \!\!\!\!\!\!\!\!\!\! \left( x\equiv 0\mod (\De^{n+}+nG) \right)   \land  \left(x\equiv k 1_\De\mod (\De+nG) \right)  $
  \item $\ds D_{(p,r,i)}(x)     \Longleftrightarrow (x\equiv 0\mod p^rG )\lor   \!\!\!\!  \bigvee_{G/\De\text{ discrete}}    \!\!\! \Bigl(  \left(x\equiv 0\mod (\De+p^iG)\right) \ds \land  \left(x\equiv 0\mod (\De^{p+}+p^rG) \right)\land  \left(x\not\equiv 0\mod (\De+p^rG) \right) \Bigr)$
\end{enumerate}
\end{claim}

\noindent\emph{Proof of the claim:}
\emph{1} is easy, lets see \emph{2.}  By remark \emph{7} in \ref{rem: rang n-regular finit}, assume $F_{n}(x)=\De$ for some $\De\in\rjn(G)$.  It is easy to see that in this case $\Ga_{1,n}(x)=\De+nG$ and $\Ga_{2,n}(x)=\De^{n+}+nG$. Moreover, if $G/\De$ is discrete,  $\cla{x}=k\cla{1_\De}$ in $\Ga_{n}(x)$ iff $x\equiv k1_\De \mod\De+nG$.
Now, keeping in mind that $F_{n}(x)=\De$ is equivalent to $(x\equiv 0\mod \De^{n+}+nG)\land(x\not\equiv 0\mod \De+nG)$, we get
the result. One must bear in mind also that, since $k>0$,  $x\equiv k1_\De \mod\De+nG$ implies $(x\not\equiv 0\mod \De+nG)$.

To prove \emph{3}, assume $F_{p^r}(x)=\De$ for some $\De\in\operatorname{RJ_{p^r}}(G)$. As before, $\Ga_{1,p^r}(x)=\De+p^rG$ and $\Ga_{2,p^r}(x)=\De^{p^r+}p^rG$. Then $\cla{x}\in p^i\Ga_{p^r}(x)$ iff $x\in \De+p^i\De^{p^r+}+p^rG=(\De+p^iG)\cap(\De^{p^r+}+p^rG)$. Moreover, $\De^{p^r+}=\De^{p+}$, since for \oag s, being $p$-regular ($p$-divisible) is equivalent to being $p^r$-regular ($p^r$-divisible).
\emph{This ends the proof of the claim.}
\end{proof}

\section{On computing \dpra}\label{sec: computing dpra}

Along this section we work again with the monster model of some complete theory $T$.
We begin by recalling the definition of ict-pattern and \dpra.

The following was   defined in \cite{Usvyatsov-genericallyStable} and \cite{onshuus_Usvyatsov-Dp-min&strong}. 
\begin{defn}
Let $p(\ov x)$ be a partial type. An ict-pattern for $p(\ov x)$ consist in the following data: a sequence of formulas $S:=(\vf(\ov x,\ov y_i) \mid i\in k)$ and an array $A:=(\ov a_j^i \mid i\in\ka,\ j\in O)$ of parameters, where $\ka$ is a cardinal number and $O$ is an infinite linearly ordered set \st:

for every $f\in O^\ka$, the following set of formulas is consistent with $p(\ov x)$:
$$
\Ga_f^S(A):=\set{\vf_i(\ov x,\ov a_{f(i)}^i) \mid i\in \ka}\cup\set{\lnot\vf_i(\ov x,\ov a_j^i) \mid i\in \ka, j\ne f(i)}
$$
The cardinal number $\ka$ is called the depth of the pattern and we allow $\ka$ to be finite.  We also say that it is an ict-pattern of type $\ka\times O$ with sequence of formulas $S$ and array $A$.

The \dpra\ of $p(\ov x)$, denoted by $\dpr(p(\ov x))$ is the supremum of all depths of all ict-patterns for $p(\ov x)$.

If $T$ denotes a complete theory with a main sort \footnote{there is a sort, called main sort, such that all other sorts are obtained as sorts of imaginaries of the theory of the main sort alone} the \dpra\ of $T$, denoted by  $\dpr(T)$ is  $\dpr(x=x)$ where $x$ is a single variable for the main sort.

\end{defn}

Now we introduce wict-patterns.
\begin{defn}
Let $p(\ov x)$ be a partial type. A \textbf{wict-pattern} for $p(\ov x)$ consist in the following data: a sequence of formulas $S:=(\vf(\ov x,\ov y_i) \mid i\in k)$ and an array $A:=(\ov a_j^i \mid i\in\ka,\ j\in O)$, where $\ka$ is a cardinal number and $O$ is an infinite linearly ordered set \st:

for every $f\in O^\ka$, the following set of formulas is consistent with $p(\ov x)$:
$$
\De_f^S(A):=\set{\vf_i(\ov x,\ov a_{f(i)}^i) \mid i\in \ka}\cup\set{\lnot\vf_i(\ov x,\ov a_j^i) \mid i\in \ka, j>f(i)}
$$
The cardinal number $\ka$ is called the depth of the pattern and we allow $\ka$ to be finite.  We also say that it is a wict-pattern of type $\ka\times O$ with sequence of formulas $S$ and array $A$.
\end{defn}

\begin{rem}\label{rem: observ. sobre wict-patterns}
\begin{enumerate}
  \item By the same arguments as for ict-patterns, we can replace the parameters of a wict-pattern (without changing the sequence of formulas nor the depth) by another array with \mui\ rows over the set of parameters of the type. This means that each row is indiscernible over the set of parameters of the type plus all other rows.  
  \item When the wict-pattern has \mui\ rows over the set of parameters of $p$, it is enough to check the consistency of a single path: If $p(\ov x)\cup\De_f^S(A)(\ov x)$ is consistent for some $f\in O^\ka$ then the same holds for any $f\in O^\ka$.
  \item The previous fact is not true in general for ict-patterns, it depends on the order-type of $O$. For instance, it holds  for ict-patterns of kind $\ka\times\Z$, but not for ict-patters of kind $\ka\times \om$ (as the formula $x>y$ shows in the theory on dense linear order without endpoints).
\end{enumerate}
\end{rem}

The following kind of patterns were also first used by Shelah. The name `special' is not standard and is simply used to distinguish them from the other two.

\begin{defn}
Let $p(\ov x)$ be a partial type. A special pattern for $p(\ov x)$ consist in the following data: a sequence of formulas $S:=(\vf(\ov x,\ov y_i) \mid i\in k)$ and an array $A:=(\ov a_j^i \mid i\in\ka,\ j\in \om)$ with \mui\ rows, where $\ka$ is a cardinal number \st\ the following set of formulas is consistent:
\begin{equation}\label{eq: special pattern}
p(\ov x)\cup\set{\vf_i(\ov x,\ov a_0^i) \mid i\in \ka}\cup\set{\lnot\vf_i(\ov x,\ov a_1^i) \mid i\in \ka}
\end{equation}
\end{defn}

The following shows we can replace ict-patterns by wict-patterns or special patterns  in order to compute \dpra.
\begin{prop}\label{prop: wict and special patterns compute dp-rank}
Let $p(\ov x)$ be a partial type and $\ka$ a cardinal number. The following are equivalent:
\begin{enumerate}
  \item There is an ict-pattern for $p$ of depth $\ka$.
  \item There is a wict-pattern for $p$ of depth $\ka$.
  \item There is an special pattern for $p$ of depth $\ka$.
\end{enumerate}
\end{prop}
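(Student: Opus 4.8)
The plan is to prove the cycle $(1)\Rightarrow(2)\Rightarrow(3)\Rightarrow(1)$, keeping the depth $\ka$ fixed throughout while allowing the sequence of formulas and the array to change. The implication $(1)\Rightarrow(2)$ is immediate: for every $f\in O^\ka$ one has $\De_f^S(A)\se\Ga_f^S(A)$ (a wict-pattern asks for $\lnot\vf_i$ only at the columns $j>f(i)$, an ict-pattern at all $j\ne f(i)$), so the very same data that witness an ict-pattern witness a wict-pattern.

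For $(2)\Rightarrow(3)$ I would first invoke Remark~\ref{rem: observ. sobre wict-patterns} to replace the array of a given wict-pattern by one with \mui\ rows over the parameters of $p$, without changing the formulas or the depth. By part~2 of that remark it then suffices to test a single path: fixing indices $c_0<c_1$ in $O$ and evaluating the constant path $f\equiv c_0$ shows that $p\cup\{\vf_i(\ov x,\ov a_{c_0}^i)\mid i\in\ka\}\cup\{\lnot\vf_i(\ov x,\ov a_j^i)\mid i\in\ka,\ j>c_0\}$ is consistent. In particular the subset that only mentions the columns $c_0$ and $c_1$ is consistent, and after reindexing the (still \mui) rows by $\om$ so that $c_0,c_1$ become $0,1$ this is exactly the consistency required in \eqref{eq: special pattern}. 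Hence we obtain a special pattern of depth $\ka$.

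The substantive implication is $(3)\Rightarrow(1)$. Starting from a special pattern I would again work with \mui\ rows and, after stretching each row to a dense index by compactness (which preserves \muy), record the relocation principle supplied by \muy: for any choice of indices $j_i<j_i'$ (one per row) there is $\ov c\models p$ with $\vf_i(\ov c,\ov a_{j_i}^i)\wedge\lnot\vf_i(\ov c,\ov a_{j_i'}^i)$ for all $i$, obtained by applying to the witness of \eqref{eq: special pattern} an automorphism acting by an order-embedding inside each row. The idea is then, \emph{row by row}, to convert the single ``split'' of $\ov c$ into a depth-one ict-configuration: for each $i$ one chooses a formula $\p_i$ — typically the paired formula $\p_i(\ov x,\ov y\,\ov y'):=\vf_i(\ov x,\ov y)\wedge\lnot\vf_i(\ov x,\ov y')$ evaluated on consecutive elements of row $i$ — together with a subfamily $(\ov b_k^i)_k$, so that along row $i$ the formula $\p_i$ can be made true at any single prescribed column and false at the others. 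These depth-one configurations are assembled into one array indexed by $\ka\times\Z$ with \mui\ rows; since the rows are \mui, by part~3 of Remark~\ref{rem: observ. sobre wict-patterns} (the single-path criterion for ict-patterns of type $\ka\times\Z$) it is enough to check the consistency of one path, and this follows by compactness from the per-row statements together with \muy, which lets finitely many rows be satisfied simultaneously.

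The hard part is precisely this row-by-row step. The bare consistency of \eqref{eq: special pattern} controls the behaviour of the witness only at the columns $0$ and $1$, whereas an ict-configuration needs a realization whose whole trace along the row is controlled (true at exactly one column). Neither a threshold nor an isolated point is available in general from a single split, so the choice of $\p_i$ must be adapted to the way $\ov c$ cuts the mutually indiscernible row $i$; this is the familiar fact that an indiscernible sequence failing to be indiscernible over $\ov c$ already yields an ict-pattern of depth one for $\tp(\ov c)$, and it is where the combinatorics of indiscernible sequences enters and the real work lies. Once that lemma is granted, \muy\ handles the bookkeeping across rows and closes the cycle.
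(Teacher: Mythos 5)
Your implications (1)$\Rightarrow$(2) and (2)$\Rightarrow$(3) are correct and essentially coincide with the paper's. The problem is (3)$\Rightarrow$(1): you have the right skeleton (mutually indiscernible rows, paired formulas on consecutive elements, the single-path criterion of point 3 of Remark \ref{rem: observ. sobre wict-patterns}), but the decisive step --- producing, from a witness whose behaviour is controlled only at columns $0$ and $1$, a formula and a row along which the witness's trace is true at exactly one position --- is not proved. You defer it to an unproven ``familiar fact'' about indiscernible sequences that fail to be indiscernible over $\ov c$. As written this is a genuine gap, and a circular-looking one: that ``familiar fact'' is itself a statement of the same nature as the proposition being proved, so granting it is close to assuming the conclusion.

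The gap is much easier to close than you suggest: no combinatorics of indiscernible sequences beyond pigeonhole is needed. Fix the witness $\ov b$ of \eqref{eq: special pattern} and pass to a mutually indiscernible array of type $\ka\times\Z$. In each row $i$, infinitely many positions $j<0$ give $\vf_i(\ov b,\ov a^i_j)$ the same truth value, and likewise for $j>1$; deleting the other positions (sub-arrays of mutually indiscernible arrays are still mutually indiscernible, and positions $0,1$ are kept) we may assume the truth value of $\vf_i(\ov b,\ov a^i_j)$ is constant for $j<0$ and constant for $j>1$. Now pair consecutive elements, $\ov c^i_j:=\ov a^i_{2j}\ov a^i_{2j+1}$, and set $\psi_i(\ov x,\ov y_i\ov z_i):=\vf_i(\ov x,\ov y_i)\leftrightarrow\lnot\vf_i(\ov x,\ov z_i)$ (your conjunction $\vf_i\land\lnot\vf_i$ would serve equally well): on any pair with equal truth values $\psi_i(\ov b,\cdot)$ is false, while on the pair $(0,1)$ it is true, so after deleting at most two boundary pairs we get $\psi_i(\ov b,\ov c^i_j)$ iff $j=0$. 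Thus $\ov b$ realizes one full path of the pattern given by $(\psi_i)_{i\in\ka}$ and $(\ov c^i_j)_{i\in\ka,\ j\in\Z}$, and point 3 of Remark \ref{rem: observ. sobre wict-patterns} (for mutually indiscernible arrays of type $\ka\times\Z$, one consistent path suffices) turns this single path into an ict-pattern of depth $\ka$. This is exactly the paper's argument; note that your ``relocation principle'' and the stretching of rows to a dense order play no role in it.
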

\begin{proof}
\emph{\emph{1}$\Rar$\emph{2}} is obvious.

\emph{\emph2}\emph{$\Rar$3}. By a standard argument we may replace the array by a $\ka\times\om$-array with \mui\ rows.

\emph{\emph{3}$\Rar$\emph{1}}. 
A compactness argument allows us to replace the array by a \mui\  array of type $\ka\times\Z$.
Now we fix some witness $\ov b$ of the consistency of \eqref{eq: special pattern}. Moreover we fix a row $i\in\ka$. By deleting some positions we may assume the truth-value of $\vf_i(\ov b,\ov a^i_j)$ for $j<0$ is constant. We may also assume the truth-value of $\vf_i(\ov b,\ov a^i_j)$ for $j>1$ is constant. By replacing the formula $\vf_i(\ov x,\ov a^i_j)$ by $\psi_i(\ov  x,\ov a^i_j\ov a^i_{j+1}):=\vf_i(\ov x,\ov a^i_j)\leftrightarrow\lnot\vf_i(\ov x,\ov a^i_{j+1})$ one achieves a new pattern with negative values on the left (with a possible exception), negative values on the right (with a possible exception) and a positive value in the 0-th. column. More precisely the formula $\psi_i(\ov x,\ov u_i)$ is $\vf_i(\ov x,\ov y_i)\leftrightarrow\lnot\vf_i(\ov x,\ov z_i)$ and the tuple is $\ov c_i:=\ov a^i_{2j}\ov a^i_{2j+1}$. By deleting at most two positions we obtain a row where $\psi(\ov b,\ov c^i_j)$ is false for $j\ne 0$ and true for $j=0$. As the new array is also \mui, by point 3 of Remark \ref{rem: observ. sobre wict-patterns} we have obtained an ict-pattern of type $\ka\times\Z$ for $p$.
\end{proof}


The following lemma allows us to avoid disjunctions in wict-patterns. It is well-known for ict patterns.
\begin{prop}\label{prop: no disjunctions}
 Assume there is a wict-pattern( for $p$ with formulas $S=(\vf_i(\ov x,\ov y_i)\mid i\in\ka)$ and for each $i\in\ka$ $\vf_i(\ov x,\ov y_i)=\bigvee_{k=0}^{n_i}\psi^k_i(\ov x,\ov y_i)$. Then for some $f\in \om^\ka$ with $f(i)\le n_i$ there is a  wict-pattern for $p(x)$ with formulas $(\psi^{f(i)}_i(\ov x,\ov y_i)\mid i\in\ka)$. The same is true for ict-patterns.
\end{prop}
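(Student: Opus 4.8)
The plan is to exploit the single-path principle recorded in Remark~\ref{rem: observ. sobre wict-patterns}, which, once the rows of the pattern are mutually indiscernible, reduces verifying a wict-pattern to checking the consistency of a single path. The crucial structural point is that $\lnot\vf_i=\bigwedge_{k=0}^{n_i}\lnot\psi^k_i$: the negation of the full disjunction implies the negation of each individual disjunct. Consequently, once we decide which disjunct plays the role of the positive entry in each row, all the negative requirements for that disjunct are automatically inherited from the negative requirements for $\vf_i$, and only the positive entries call for a choice.

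First I would use Remark~\ref{rem: observ. sobre wict-patterns}.1 to replace the array $A$ by one, still denoted $A$, with \mui\ rows over the parameters of $p$, keeping the formulas $S$ and the depth $\ka$. I then fix an arbitrary path $g\in O^\ka$; as $(S,A)$ is a wict-pattern, $p(\ov x)\cup\De_g^S(A)$ is consistent, and I pick a realization $\ov b$. For each $i\in\ka$ the positive entry $\vf_i(\ov b,\ov a^i_{g(i)})=\bigvee_k\psi^k_i(\ov b,\ov a^i_{g(i)})$ holds, so I may select $f(i)\le n_i$ with $\psi^{f(i)}_i(\ov b,\ov a^i_{g(i)})$ true. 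For the negative entries, each $\lnot\vf_i(\ov b,\ov a^i_j)$ with $j>g(i)$ forces all disjuncts, hence $\psi^{f(i)}_i(\ov b,\ov a^i_j)$, to fail. Thus the same $\ov b$ witnesses the consistency of the single path $g$ for the pattern with formulas $(\psi^{f(i)}_i(\ov x,\ov y_i)\mid i\in\ka)$ and the unchanged array $A$.

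Since $A$ still has \mui\ rows, Remark~\ref{rem: observ. sobre wict-patterns}.2 promotes consistency of the one path $g$ to consistency of every path, so $(\psi^{f(i)}_i)_{i\in\ka}$ together with $A$ is a wict-pattern for $p$ of depth $\ka$ with $f\in\om^\ka$ and $f(i)\le n_i$, which is the assertion.

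For ict-patterns the scheme is identical except that the single-path principle is order-sensitive (Remark~\ref{rem: observ. sobre wict-patterns}.3): I would first pass to a \mui\ array of type $\ka\times\Z$, for which one path does suffice. Fixing the path $g\equiv 0$, choosing $\ov b$ and the disjunct indices $f(i)$ from the positive column exactly as above, and using $\lnot\vf_i=\bigwedge_k\lnot\psi^k_i$ to dispatch the negative entries at all $j\ne 0$, the $\Z$-version of the single-path principle then yields the desired ict-pattern. The one genuine subtlety, and the reason mutual indiscernibility is indispensable, is that a priori the disjunct realized in the positive column could vary from path to path; mutual indiscernibility is precisely what lets us freeze a single disjunct index $f(i)$ per row that works uniformly, so this is the step to treat with care.
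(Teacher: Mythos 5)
Your proof is correct and follows essentially the same route as the paper's: pass to a mutually indiscernible array (of type $\ka\times\om$ for wict-patterns, $\ka\times\Z$ for ict-patterns), realize a single path, pick for each row a disjunct that holds at the positive entry, observe that the negative entries are inherited because $\lnot\vf_i$ implies $\lnot\psi^k_i$ for every $k$, and then invoke the single-path principle of Remark~\ref{rem: observ. sobre wict-patterns} to recover the full pattern. The only cosmetic difference is that you fix an arbitrary path where the paper fixes the path through column $0$, and you spell out the negative-entry inheritance that the paper leaves implicit.
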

\begin{proof}
Let $(\ov a_j^i)_{i\in\ka,j\in \om}$ be a \mui\ $\ka\times\om$ array showing there is a wict-pattern for $p$ with formulas $S$. There is some $\ov b$ such that $\models\vf_i(\ov b,\ov a_0^i)$ and $\not\models\vf_i(\ov b,\ov a_j^i)$ for all $i\in \ka$ and $j>0$. Hence, for some $f\in \om^\ka$ with $f(i)\le n_i$ $\models\psi^{f(i)}_i(\ov b,\ov a_0^i)$ for all $i\in \ka$. By point 2 in Remark \ref{rem: observ. sobre wict-patterns},   $(\ov a_j^i)_{i\in\ka\\ j\in \om}$ together with $(\psi^{f(i)}_i(\ov x,\ov y_i)\mid i\in\ka)$ is a $\ka\times\om$ wict-pattern for $p$. The argument for ict-patterns is the same starting with an array of type $\ka\times\Z$.
\end{proof}

Now, we see that for wict patterns, the same holds for conjunctions.
\begin{prop}\label{prop: no conjunctions}
 Assume there is a wict-pattern for $p(\ov x)$ with sequence of formulas $S=(\vf_i(\ov x,\ov y_i)\mid i\in\ka)$ and $\vf_i(\ov x,\ov y_i)=\bigwedge_{k=0}^{n_i}\psi^k_i(\ov x,\ov y_i)$ for each $i\in\ka$.  Then for some $f\in \om^\ka$ with $f(i)\le n_i$ there is a wict-pattern for $p(\ov x)$ with formulas $(\psi^{f(i)}_i(\ov x,\ov y_i)\mid i\in\ka)$.
\end{prop}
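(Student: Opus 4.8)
The plan is to follow the pattern of the proof of Proposition \ref{prop: no disjunctions}, but to add a pigeonhole argument that stabilizes which conjunct is violated. First I would invoke point 1 of Remark \ref{rem: observ. sobre wict-patterns} to replace the given array by a $\ka\times\om$ array $(\ov a^i_j)_{i\in\ka,\,j\in\om}$ whose rows are \mui\ over the set $C$ of parameters of $p$, without changing the sequence of formulas nor the depth. By point 2 of the same remark it then suffices to realize a single path, so I fix a witness $\ov b$ of the constant path $f\equiv 0$, that is an element with $\models\vf_i(\ov b,\ov a^i_0)$ for all $i\in\ka$ and $\not\models\vf_i(\ov b,\ov a^i_j)$ for all $i\in\ka$ and $j>0$. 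Since $\vf_i=\bigwedge_{k=0}^{n_i}\psi^k_i$, the positive conditions give $\models\psi^k_i(\ov b,\ov a^i_0)$ for every $k\le n_i$, so the positive part of the desired pattern will hold for \emph{any} choice of conjunct; the whole difficulty is concentrated in the negative part.

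The negative conditions only tell us that for each $i$ and each $j>0$ some conjunct fails, \ie\ the set $\set{k\le n_i \mid \ \not\models\psi^k_i(\ov b,\ov a^i_j)}$ is nonempty, but a priori the violated index depends on $j$. Here I use that a wict-pattern is one-sided: the negated instances all sit to the right of the positive column. Working in each row separately, I color each $j>0$ by the least $k\le n_i$ with $\not\models\psi^k_i(\ov b,\ov a^i_j)$; as there are only finitely many colors, infinite pigeonhole yields an index $f(i)\le n_i$ and an infinite set $J_i\se\set{j\in\om \mid j>0}$ with $\not\models\psi^{f(i)}_i(\ov b,\ov a^i_j)$ for all $j\in J_i$. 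I then thin row $i$ to the columns $\set{0}\cup J_i$, reindexed in increasing order so that column $0$ stays in position $0$, and do this independently in every row.

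Thinning the rows independently preserves \muy\ over $C$: each thinned row is a subsequence of the original, the remaining thinned rows form a subset of the old parameters, and indiscernibility over a set passes both to subsets and to subsequences. By construction $\ov b$ now witnesses the constant path for the thinned array relative to the formulas $(\psi^{f(i)}_i(\ov x,\ov y_i)\mid i\in\ka)$: at column $0$ every conjunct of $\vf_i$ holds, so in particular $\models\psi^{f(i)}_i(\ov b,\ov a^i_0)$, while for $j\in J_i$ we arranged $\not\models\psi^{f(i)}_i(\ov b,\ov a^i_j)$. Since the thinned array is \mui\ over $C$ and one path is consistent with $p$, point 2 of Remark \ref{rem: observ. sobre wict-patterns} upgrades this to a full wict-pattern for $p$ with formulas $(\psi^{f(i)}_i\mid i\in\ka)$, as required. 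The main obstacle is exactly the column-dependence of the violated conjunct; it is resolved by the pigeonhole above, and this is the step where one-sidedness is essential. For ict-patterns the negated instances lie on both sides of the positive column, so one cannot stabilize a single conjunct by thinning a single tail, which is why the analogous statement fails there.
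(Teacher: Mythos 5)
Your proof is correct and follows essentially the same route as the paper's: fix a witness of the constant path for a mutually indiscernible array, use pigeonhole in each row to find a conjunct failing infinitely often to the right, thin the row so it fails everywhere to the right, and invoke point 2 of Remark \ref{rem: observ. sobre wict-patterns} to recover a full wict-pattern. Your explicit verification that thinning preserves mutual indiscernibility, and your closing remark on why one-sidedness is what makes this work (and why it fails for ict-patterns), only make explicit what the paper leaves implicit.
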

\begin{proof}
If $(\ov a_j^i)_{i\in\ka, j\in \om}$ is a \mui\  $\ka\times\om$ wict-pattern with sequence $S$ there is some $\ov b$ such that $\models\vf_i(\ov b,\ov a_0^i)$ and $\not\models\vf_i(\ov b,\ov a_j^i)$ for all $i\in \ka$ and $j\ne0$.
Fix some row $i$.  Since  $\vf_i(\ov b,\ov a_j^i)$  is false for $j>0$, there is some $k\in\set{1,\ldots n_i}$ such that $\psi^k_i(\ov b,\ov a^i_j)$ fails for infinitely many $j>0$. By deleting elements in the row, we may assume  that $\psi^k_i(\ov b,\ov a^i_j)$ fails for all $j>0$. Hence $\models\psi^k_i(\ov b,\ov a_0^i)$ and $\not\models\psi^k_i(\ov b,\ov a_j^i)$ for all and $j\ne0$. Since this may be done for any $i$, we get $f\in \om^\ka$ such that $\models\psi^{f(i)}_i(\ov b,\ov a_0^i)$ and $\not\models\psi^{f(i)}_i(\ov b,\ov a_j^i)$ for all $i\in \ka$ and $j\ne0$.
By By point 2 in Remark \ref{rem: observ. sobre wict-patterns}, $(\ov a_j^i)_{i\in\ka,\ j\in \om}$ together with $(\psi^{f(i)}_i(\ov x,\ov y_i)\mid i\in\ka)$ forms a $\ka\times\om$ wict-pattern for  $p$.
\end{proof}
Proposition \ref{prop: no conjunctions} does not hold for ict-patterns as the following example shows. In the theory of dense linearly ordered sets without endpoints there is an ict-pattern of depth $1$ with formula $y_1<x<y_2$ but there is no ict-pattern for none of the formulas $y_1<x$ nor $x<y_2$.

However we can reduce any conjunction to a conjunctions of  at most two formulas in ict-patterns:
\begin{prop}
 Assume there is an ict-pattern for $p(\ov x)$ with sequence of formulas $S=(\vf_i(\ov x,\ov y_i)\mid i\in\ka)$ and $\vf_i(\ov x,\ov y_i)=\bigwedge_{k=0}^{n_i}\psi^k_i(\ov x,\ov y_i)$ for each $i\in\ka$.  Then for every $i\in \ka$ there is some $I_i\se\set{0,\ldots,n_i}$ with at most two elements \st\ there is an  ict-pattern for $p(x)$ with formulas  $(\bigwedge_{j\in I_i}\psi^j_i(\ov x,\ov y_i)\mid i\in\ka)$.
\end{prop}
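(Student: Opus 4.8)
The plan is to follow the proof of Proposition \ref{prop: no conjunctions} as closely as possible, the essential new feature being that an ict-pattern demands the negated instances on \emph{both} sides of the distinguished column; this two-sidedness is exactly what forces two conjuncts rather than one.

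First I would normalize the given pattern. By the standard extraction argument (compactness together with the existence of mutually indiscernible sequences, exactly as used in the proof of Proposition \ref{prop: wict and special patterns compute dp-rank}) I may assume the array $(\ov a_j^i)_{i\in\ka,\ j\in\Z}$ is \mui\ over the parameters of $p$ and of type $\ka\times\Z$. Since every path of an ict-pattern is consistent, in particular the constant path $f\equiv 0$ is: fix a witness $\ov b$ with $\models\vf_i(\ov b,\ov a_0^i)$ for all $i\in\ka$ and $\not\models\vf_i(\ov b,\ov a_j^i)$ for all $i\in\ka$ and all $j\ne 0$.

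Now fix a row $i$. Because $\vf_i=\bigwedge_{k\le n_i}\psi^k_i$, failure of $\vf_i(\ov b,\ov a_j^i)$ forces at least one conjunct to fail at $\ov a_j^i$. Applying the pigeonhole principle twice, I obtain a single index $k^+=k^+(i)$ with $\psi^{k^+}_i(\ov b,\ov a_j^i)$ failing for infinitely many $j>0$, and symmetrically an index $k^-=k^-(i)$ with $\psi^{k^-}_i(\ov b,\ov a_j^i)$ failing for infinitely many $j<0$. Put $I_i:=\set{k^+,k^-}$, a set with at most two elements. I then thin row $i$: keep column $0$, keep those $j>0$ at which $\psi^{k^+}_i$ fails, and keep those $j<0$ at which $\psi^{k^-}_i$ fails. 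Doing this independently in every row and re-indexing each shortened row by $\Z$ (sending the retained $0$-column to $0$) produces a new array $(\ov c_j^i)$. Passing to a subsequence of each row separately preserves \muy, since each shortened row remains indiscernible over a \emph{subset} of the set over which it was previously indiscernible; and each retained row still has order type $\Z$, being infinite on both sides.

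For the new formulas $\chi_i:=\bigwedge_{k\in I_i}\psi^k_i$ the same $\ov b$ witnesses the path $f\equiv 0$: at column $0$ all conjuncts of $\vf_i$ hold, so $\models\chi_i(\ov b,\ov c_0^i)$; for $j>0$ the conjunct $\psi^{k^+}_i$ fails and for $j<0$ the conjunct $\psi^{k^-}_i$ fails, so $\not\models\chi_i(\ov b,\ov c_j^i)$ for every $j\ne 0$. As $(\ov c_j^i)$ is \mui\ of type $\ka\times\Z$, the single-path criterion recorded in Remark \ref{rem: observ. sobre wict-patterns}.\emph{3} (which holds for ict-patterns precisely in the $\ka\times\Z$ case) upgrades consistency of this one path to consistency of all paths, yielding the desired ict-pattern with formulas $(\chi_i\mid i\in\ka)=(\bigwedge_{j\in I_i}\psi^j_i(\ov x,\ov y_i)\mid i\in\ka)$. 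The main obstacle, and the reason the bound is $2$ and not $1$, is exactly this two-sidedness: the conjunct killing the conjunction for large positive $j$ need not be the one killing it for large negative $j$ (the $y_1<x<y_2$ phenomenon), so one must track $k^+$ and $k^-$ separately and thin each side independently while taking care that the independent thinning keeps the array mutually indiscernible, so that the $\ka\times\Z$ single-path criterion remains available.
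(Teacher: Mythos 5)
Your proposal is correct and follows essentially the same route as the paper's own proof: fix a mutually indiscernible $\ka\times\Z$ array, take a witness of the constant path, use pigeonhole to find one conjunct failing cofinally on each side of column $0$, thin each row so those two conjuncts fail everywhere off column $0$, and invoke the single-path criterion for $\ka\times\Z$ ict-patterns (point 3 of Remark \ref{rem: observ. sobre wict-patterns}). Your treatment is in fact slightly more explicit than the paper's on the points it leaves implicit, namely that independent thinning of the rows preserves mutual indiscernibility and that each thinned row still has order type $\Z$.
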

\begin{proof}
If  $S$ together $(\ov a_j^i)_{i\in\ka, j\in \Z}$ is a \mui\  $\ka\times\Z$ ict-pattern for  $p$, there is some $\ov b$ such that $\models\vf_i(\ov b,\ov a_0^i)$ and $\not\models\vf_i(\ov b,\ov a_j^i)$ for all $i\in \ka$ and $j\ne0$.

Fix some row $i$.  Since  $\vf_i(\ov x,\ov a_j^i)$  is false for $j>0$, there is some $k\in\set{1,\ldots n_i}$ such that $\psi^k_i(\ov b,\ov a^i_j)$ fails for infinitely many $j>0$. By deleting elements in the row, we may assume  that $\psi^k_i(\ov b,\ov a^i_j)$ fails for all $j>0$. The same happens for for $j<0$ with maybe another formula. These two formulas provide the set $I_i$.
Hence,  we get that  $\models\bigwedge_{j\in I_i}\psi^j_i(\ov b,\ov a_0^i)$ and $\not\models\bigwedge_{j\in I_i}\psi^j_i(\ov b,\ov a_j^i)$ for all $i\in \ka$ and $j\ne0$.
By point 3 in Remark \ref{rem: observ. sobre wict-patterns}, $(\ov a_j^i)_{i\in\ka, j\in \Z}$ is the  $\ka\times\Z$ array of an ict-pattern with formulas $(\bigwedge_{j\in I_i}\psi^j_i(\ov x,\ov y_i)\mid i\in\ka)$.
\end{proof}

The following definition is standard, see \cite{Adl:NIP} or \cite{ADHMS:2} We include it here for a better readability of the paper.

\begin{defn}
  Let  $\vf(\ov x,\ov y)$  be a partitioned formula. The dual-alternation number of  $\vf(\ov x,\ov y)$ is  the maximum number of changes of the truth-value of  $\vf(\ov b,\ov a_i)$ (when $i$ increases in $O$) for any tuple $\ov b$ and any indiscernible sequence $(\ov a_i,i\in O)$.
\end{defn}

\begin{rem}
A (partitioned) formula $\vf(\ov x,\ov y)$ has dual-alternation number equal to zero iff  $\set{\vf(M,\ov a) \mid \ov a\in M}$ is a finite set in any (some) model. We consider that a partitioned formula $\vf(\ov x;\ov y)$ with $\ov y$ the empty tuple has dual-alternation number zero.
\end{rem}
\begin{proof}
If the set $\set{\vf(M,\ov a) \mid \ov a\in M}$ is finite, in an indiscernible sequence $(\ov a_i\mid i\in\om)$ some repetition must occur: $\vf(M,\ov a_i)=\vf(M,\ov a_j)$ for some $i\ne j$ hence $\vf(M,\ov a_i)$ should be constant. This implies that the dual-alternation number of $\vf(\ov x,\ov y)$ is zero. Conversely, if the set $\set{\vf(M,\ov a) \mid \ov a\in M}$ is infinite, by a standard compactness plus Ramsey argument (see for instance Lemma 5.1.3 of \cite{TentK-ZieglerM:CMT}),
one can construct an indiscernible sequence $(\ov a_i\mid i\in\om)$ where $\vf(M,\ov a_i)$ is not constant. Hence the dual-alternation number of $\vf(\ov x,\ov y)$ is at least one.
\end{proof}
\begin{defn}
We call a formula with dual-alternation number zero \textbf{non-alternating}. We will say that the formula is NA or a NA-formula.
\end{defn}
The following definition comes from  Adler \cite{Adl:VCmin}.
\begin{defn}\label{def: directed family}
  We call a set of partitioned formulas $\Psi(\ov x,\ov y)$ an instantiable directed family (or a directed family for short) if for any two sets $A,B$ defined by instances of formulas in $\Psi$  either $A\se B$, $B\se A$, or $A\cap B=\buit$. By an instance of $\vf(\ov x,\ov y)$ we mean the formula $\vf(\ov x,\ov b)$ for some $\ov b$ of the appropriate length.
\end{defn}
In the previous definition $\ov x$ is a finite tuple of variables, while we allow $\ov y$ to be an infinite tuple. Hence, although we use the same tuple $\ov y$, not all formulas in $\Psi$ must have the same finite tuple of parameter variables.

\begin{prop}\label{prop: single-row}
Let $\Psi$ be a directed family of partitioned formulas. 
Then there is no wict-pattern with two different rows  with formulas of $\Psi$ or negation of formulas of $\Psi$ (two formulas of the sequence with different index cannot both belong to $\Psi\cup\lnot\Psi$, where  $\lnot\Psi$ denotes  $\set{\lnot \psi \mid \psi\in\Psi}$).
\end{prop}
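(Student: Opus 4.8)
The plan is to argue by contradiction, isolating the two offending rows and just two columns, and to show that the four paths these induce already pin down the mutual position of $A_{i_1}$ and $B_{i_1}$ in a way that directedness forbids. So suppose there is a wict-pattern for $p$ in which two rows, say row $A$ with formula $\vf_A$ and parameters $(\ov a_j)_{j\in O}$ and row $B$ with formula $\vf_B$ and parameters $(\ov b_k)_{k\in O}$, both have $\vf_A,\vf_B\in\Psi\cup\lnot\Psi$. Fix two indices $i_0<i_1$ in $O$ and write $A_j:=\vf_A(M,\ov a_j)$ and $B_k:=\vf_B(M,\ov b_k)$. Let $\al_j$ and $\be_k$ be the underlying instances of formulas of $\Psi$, so that $A_j$ equals $\al_j$ or its complement $\al_j^c$ according as $\vf_A\in\Psi$ or $\vf_A\in\lnot\Psi$, and likewise for $B_k$ and $\be_k$; in particular $\al_{i_1}$ and $\be_{i_1}$ are instances of formulas of $\Psi$, hence are nested or disjoint. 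I would then read off, from the definition of wict-pattern, the consistency of the four paths in which the two rows take values in $\set{i_0,i_1}$ and all remaining rows are fixed (say at $i_0$). Projecting their witnesses to the two rows gives: from $(i_1,i_1)$ a point in $A_{i_1}\cap B_{i_1}$; from $(i_0,i_1)$ a point in $B_{i_1}\setminus A_{i_1}$ (since $i_1>i_0$ forces $\lnot\vf_A$ at $i_1$); from $(i_1,i_0)$ a point in $A_{i_1}\setminus B_{i_1}$; and from $(i_0,i_0)$ a point $q$ with $q\notin A_{i_1}$ and $q\notin B_{i_1}$. The first three say that $A_{i_1}$ and $B_{i_1}$ \emph{cross}, that is, all of $A_{i_1}\cap B_{i_1}$, $A_{i_1}\setminus B_{i_1}$, $B_{i_1}\setminus A_{i_1}$ are nonempty.

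Next I would split into three cases according to how many of $\vf_A,\vf_B$ lie in $\lnot\Psi$. If both lie in $\Psi$, then $A_{i_1}=\al_{i_1}$ and $B_{i_1}=\be_{i_1}$ are instances of $\Psi$; being nested or disjoint they make one of the three regions empty, contradicting crossing. If exactly one is negated, say $A_{i_1}=\al_{i_1}$ and $B_{i_1}=\be_{i_1}^c$, then crossing gives $\al_{i_1}\cap\be_{i_1}=A_{i_1}\setminus B_{i_1}\ne\buit$ and $\al_{i_1}\setminus\be_{i_1}=A_{i_1}\cap B_{i_1}\ne\buit$; by directedness the first forces $\al_{i_1}$ and $\be_{i_1}$ to be nested and the second then forces $\be_{i_1}\subsetneq\al_{i_1}$. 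But the diagonal point $q$ satisfies $q\notin A_{i_1}=\al_{i_1}$, while $q\notin B_{i_1}=\be_{i_1}^c$ gives $q\in\be_{i_1}\subsetneq\al_{i_1}$, a contradiction. If both are negated, $A_{i_1}=\al_{i_1}^c$ and $B_{i_1}=\be_{i_1}^c$, then crossing gives $\al_{i_1}\setminus\be_{i_1}=B_{i_1}\setminus A_{i_1}\ne\buit$ and $\be_{i_1}\setminus\al_{i_1}=A_{i_1}\setminus B_{i_1}\ne\buit$, so by directedness $\al_{i_1}\cap\be_{i_1}=\buit$; yet $q\notin A_{i_1}$ and $q\notin B_{i_1}$ give $q\in\al_{i_1}\cap\be_{i_1}$, again a contradiction. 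The remaining mixed case is symmetric.

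The one genuinely delicate point is the bookkeeping between complementation and the order convention ``$\lnot\vf$ above $f(i)$'': the crossing is always read off $A_{i_1}$ and $B_{i_1}$, but it must be translated correctly into a containment or a disjointness of the \emph{underlying} instances $\al_{i_1},\be_{i_1}$, and then confronted with the membership pattern of the single diagonal point $q$. Everything else is elementary set theory together with the defining property of a directed family; no appeal to (mutual) indiscernibility or to the single-path criterion of Remark \ref{rem: observ. sobre wict-patterns} seems necessary, only that $O$ contains two points. I expect the main effort in the final write-up to lie in presenting the three complementation cases without undue repetition, most likely by stating ``cross'' once and computing the three Venn regions of $A_{i_1},B_{i_1}$ uniformly in terms of $\al_{i_1},\be_{i_1}$.
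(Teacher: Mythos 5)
Your proof is correct, and it takes a genuinely different route from the paper's. The paper first normalizes the pattern: by Remark \ref{rem: observ. sobre wict-patterns} it may be assumed to be of type $2\times\omega$ with mutually indiscernible rows, and then only the single path $(0,0)$ is used; in each of the three complementation cases, directedness produces a containment at one column (e.g.\ $A_0\subseteq B_0$), which mutual indiscernibility transports to another column (e.g.\ $A_0\subseteq B_1$), contradicting the $(0,0)$-witness. You avoid indiscernibility altogether: the consistency of four paths over two columns $i_0<i_1$ shows all four Venn regions of $A_{i_1}$ and $B_{i_1}$ are nonempty (the crossing plus the exterior point $q$), and since replacing a set by its complement merely permutes the four Venn regions, directedness of the underlying instances $\alpha_{i_1},\beta_{i_1}$ forces one of them to be empty, a contradiction; your case-by-case bookkeeping of this permutation is accurate, and the asymmetric consistency condition of wict-patterns (negation only required at $j>f(i)$) is used exactly where it is needed, in the paths $(i_0,i_1)$ and $(i_1,i_0)$. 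What your route buys: it is self-contained (no Ramsey/compactness normalization to mutually indiscernible rows), needs only two points of $O$, and applies verbatim to any wict-pattern without preprocessing. What the paper's route buys: once mutual indiscernibility is in place (and the paper needs that normalization elsewhere anyway), a single path suffices and each case closes in one line.
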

Obviously the same holds for ict or special patterns.
\begin{proof}

Assume  there is a wict-pattern of depth 2 where the two formulas of the sequence or their negations belonging to $\Psi$. We may assume the pattern is of kind $2\times \om$ with \mui\ rows. Let us denote by $(A_i \mid i\in\om)$ the sets defined by the first row and $(B_i \mid i\in\om)$  the sets defined by the second. We have to distinguish three cases depending on whether the formulas of the pattern or their negations belong to $\Psi$.

Case 1. Both formulas belong to $\Psi$. By the consistency of the path $(0,0)$, $A_0$ and $B_0$ have nonempty intersection, thus \wlogy\ $A_0\se B_0$. By \muy\ $A_0\se B_1$, which contradicts the consistency of the path $(0,0)$.

Case 2. Only one formula belongs to $\Psi$. We may assume the $A_i$ are defined by instances of formulas in $\Psi$ while $B_i=D_i^C$ is the complement  of a set $D_i$ defined by an instance of a formula in $\Psi$. Again by the consistency of the path $(0,0)$, $A_0\cap B_1^C=A_0\cap D_1$ is nonempty, hence either $A_0\se D_1$ or $D_1\se A_0$. In the first case, by \muy, $A_0\se D_0$ and thus $A_0\cap B_0=\buit$, which contradicts the consistency of the the path $(0,0)$. The second case  implies $B_1^C\se A_1$. This contradicts again the consistency of the path $(0,0)$ because $A_1^C\cap B_1^C$ should be nonempty.

Case 3. No formula belong to $\Psi$. In this case both $A_i$ and $B_i$ are the complements of $C_i$ and $D_i$ respectively, sets defined by instances of formulas in $\Psi$. Again the consistency of the path $(0,0)$ implies $A_1^C\cap B_1^C=C_1\cap D_1$ is nonempty. Hence we may assume $C_1\se D_1$ and thus $B_1\se A_1$. By \muy, $B_0\se A_1$ which contradicts again the consistency of the path $(0,0)$.
\end{proof}

\begin{defn}
Let $\la$ be a cardinal (finite or infinite). We say that a complete theory $T$ is $\la$-VCA if there is collection of $\la$ instantiable directed families $\gen{\Psi_i(x,\ov y)\mid i\in\la}$  such that each definable 1-set in the single variable $x$ is a boolean combination of instances of formulas in $\bigcup_{i\in\la}\Psi_i(x,\ov y)$ and instances of NA-formulas of kind $\psi(x,\ov y)$. We call a complete theory $T$  VCA-minimal if it is $1$-VCA.
 \end{defn}

Obviously any VC-minimal theory is VCA-minimal. However the converse fails as Proposition \ref{prop: caract. dp-minimal} shows. 

\begin{fet}\label{fet: finite boolean options}
Let $\Si(\ov x,\ov y)$ be a set of formulas without parameters, where the tuple $\ov y$ can be infinite. Assume each definable set with parameters with free variables among $\ov x$ is definable by an instance of some formula in $\Si$. Then, for each formula $\vf(\ov x,\ov y)$ without parameters there is a finite subset $\Theta$ of $\Si$ such that each instance of $\vf(\ov x,\ov y)$ is an instance of some formula in  $\Theta$.
\end{fet}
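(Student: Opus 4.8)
The plan is to upgrade the pointwise hypothesis to a uniform one by a compactness argument in the saturated monster model. The hypothesis says that \emph{each} instance $\vf(\ov x,\ov b)$ is definable by an instance of \emph{some} $\si\in\Si$, but a priori both the chosen $\si$ and the parameters needed may depend on $\ov b$; the content of the statement is that finitely many choices of $\si$ suffice for all $\ov b$ at once.

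First I would express, for each $\si(\ov x,\ov y')\in\Si$, the assertion ``$\vf(\ov x,\ov b)$ defines the same set as an instance of $\si$'' as a formula in the parameter tuple $\ov y$ of $\vf$. Taking a fresh tuple $\ov y'$ of the (finite) length of the parameter tuple of $\si$, set
$$\theta_\si(\ov y):=\exists \ov y'\,\forall\ov x\,\bigl(\vf(\ov x,\ov y)\leftrightarrow\si(\ov x,\ov y')\bigr).$$
This is a \fo\ formula without parameters, and $\models\theta_\si(\ov b)$ holds precisely when $\vf(M,\ov b)$ is definable by an instance of $\si$.

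Next I would rephrase the hypothesis in terms of these formulas. For any $\ov b$, the set $\vf(M,\ov b)$ has free variables among $\ov x$, so by assumption it is definable by an instance of some $\si\in\Si$; that is, $\models\theta_\si(\ov b)$ for some $\si$. Consequently no $\ov b$ realizes $\set{\lnot\theta_\si(\ov y)\mid\si\in\Si}$, so this partial type over $\buit$ in the variables $\ov y$ is unsatisfiable in the monster, hence (by saturation) inconsistent. By compactness some finite $\Theta=\set{\tira\si k}\se\Si$ already makes $\set{\lnot\theta_{\si_1},\dots,\lnot\theta_{\si_k}}$ inconsistent, i.e.\ $\models\forall\ov y\,\bigvee_{i\le k}\theta_{\si_i}(\ov y)$. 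Unwinding $\theta_{\si_i}$, this says exactly that every instance $\vf(\ov x,\ov b)$ is definable by an instance of some $\si_i\in\Theta$, which is the desired conclusion.

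The argument is routine, and the only point deserving attention is that each $\theta_\si$ really is a single \fo\ formula. Although the common tuple $\ov y$ is allowed to be infinite, both $\vf$ and each $\si$ are ordinary formulas using only finitely many variables: the quantified tuple $\ov y'$ has the finite length of the parameter tuple of $\si$, and (as throughout the paper) $\ov x$ is a finite tuple, so the blocks $\exists\ov y'$ and $\forall\ov x$ are finite. Thus $\theta_\si$ is first-order and the compactness step applies as stated.
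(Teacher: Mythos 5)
Your proof is correct and matches the paper's argument: the paper likewise forms, for each $\psi\in\Si$, the formula $\lnot\exists\ov z\forall \ov x(\vf(\ov x,\ov y)\leftrightarrow\psi(\ov x,\ov z))$ in the free variables $\ov y$, observes that the hypothesis makes this set of formulas inconsistent with $T$, and extracts the finite $\Theta$ by compactness. Your additional remarks (saturation of the monster, and finiteness of the quantifier blocks despite $\ov y$ being possibly infinite) are just explicit justifications of steps the paper leaves implicit.
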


\begin{proof}
$$
T\cup\set{\lnot\exists\ov z\forall x(\vf(x,\ov y)\leftrightarrow\psi(x,\ov z))  ,\psi(\ov x,\ov y)\in \Sigma  }
$$
is inconsistent.
By compactness $T\cup\set{\lnot\exists\ov z\forall x(\vf(x,\ov y)\leftrightarrow\psi(x,\ov z))  ,\psi(\ov x,\ov y)\in \Theta  }
$ is inconsistent  for some finite $\Theta\se \Sigma$.
\end{proof}

\begin{cor}\label{cor: VCA-minimal => dp-minimal}
Any VCA-minimal theory is \textrm{dp}-minimal.
\end{cor}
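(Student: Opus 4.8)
The plan is to argue by contradiction. Suppose $\dpr(T)\ge 2$; I will produce a wict-pattern of depth $2$ for $x=x$ both of whose rows carry formulas from $\Psi\cup\lnot\Psi$, where $\Psi(x,\ov y)$ is the single directed family witnessing that $T$ is $1$-VCA. This is precisely the configuration forbidden by Proposition~\ref{prop: single-row}. By Proposition~\ref{prop: wict and special patterns compute dp-rank} an ict-pattern of depth $2$ yields a wict-pattern of depth $2$, and by point $1$ of Remark~\ref{rem: observ. sobre wict-patterns} I may assume its array $(\ov a^i_j)$ has mutually indiscernible rows.

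First I would uniformize the two row-formulas $\vf_0,\vf_1$ into genuine boolean combinations. Let $\Sigma(x,\ov z)$ be the set of all parameter-free boolean combinations of formulas of $\Psi$ and NA-formulas; by VCA-minimality every definable $1$-set is an instance of some member of $\Sigma$. Applying Fact~\ref{fet: finite boolean options} to each $\vf_i$ produces a finite $\Theta_i\se\Sigma$ such that every instance of $\vf_i$ is an instance of some $\theta\in\Theta_i$. Since the condition ``$\vf_i(x,\ov a^i_j)$ is an instance of $\theta$'' is preserved along the indiscernible row $(\ov a^i_j)_j$, one fixed $\theta_i\in\Theta_i$ works for the whole row; replacing $\vf_i$ by $\theta_i$ (and the parameters accordingly) does not alter the defined sets, so I still have a wict-pattern of depth $2$, which I again make mutually indiscernible. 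Now each $\theta_i$ is a fixed boolean combination of instances of $\Psi$-formulas and NA-formulas. Putting $\theta_i$ in disjunctive normal form and applying first Proposition~\ref{prop: no disjunctions} and then Proposition~\ref{prop: no conjunctions} reduces the situation to a wict-pattern of depth $2$ in which each row uses a single literal $\chi_i$, where $\chi_i$ is an instance of a formula of $\Psi$, a negation of such an instance, or an NA-formula (note that a negation of an NA-formula is again NA, its defining sets forming a finite family).

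It remains to discard the NA possibility. A row of a wict-pattern with mutually indiscernible rows is an indiscernible sequence $(\ov a^i_j)_j$ for which there is $\ov b$ with $\models\chi_i(\ov b,\ov a^i_0)$ and $\not\models\chi_i(\ov b,\ov a^i_j)$ for $j>0$; but if $\chi_i$ were NA, then by the argument of the remark characterizing dual-alternation number zero the set $\chi_i(M,\ov a^i_j)$ would be constant along the sequence, contradicting this isolated change of truth value. Hence both $\chi_0$ and $\chi_1$ lie in $\Psi\cup\lnot\Psi$, so the two rows carry formulas from $\Psi\cup\lnot\Psi$ with distinct indices, contradicting Proposition~\ref{prop: single-row}. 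Therefore $\dpr(T)\le 1$ and $T$ is dp-minimal. I expect the uniformization in the second paragraph to be the main obstacle: turning the pointwise boolean decomposition supplied by VCA-minimality into one fixed boolean shape valid along each entire row is exactly where Fact~\ref{fet: finite boolean options} together with mutual indiscernibility is needed.
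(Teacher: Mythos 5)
Your proof is correct and follows essentially the same route as the paper: uniformize each row into a fixed boolean combination via Fact~\ref{fet: finite boolean options}, reduce to literals with Propositions~\ref{prop: no disjunctions} and~\ref{prop: no conjunctions}, and conclude by excluding NA-formulas and invoking Proposition~\ref{prop: single-row}. The only cosmetic differences are that you frame the argument as a contradiction from a depth-$2$ pattern and use indiscernibility of the rows for the uniformization, where the paper uses a pigeonhole argument (some boolean shape ``occurs infinitely many times'' in each row).
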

\begin{proof}
Let $\Psi( x,\ov y)$ be a directed family witnessing the VCA-minimality of the theory. We may assume that each formula in any wict-pattern for $x=x$ is a boolean combination of formulas of $\Psi$ and NA-formulas: in each row of the pattern some boolean combination provided by Fact \ref{fet: finite boolean options} will occur infinitely many times.
By Propositions \ref{prop: no disjunctions} and \ref{prop: no conjunctions} we can assume each row of a wict-pattern is constituted by a formula of $\Psi$, a negation of a formula of $\Psi$ or a non-alternating formula (the negation of an NA-formula is NA). By Proposition \ref{prop: single-row} $\Psi$ can contribute with at most one row, and no non-alternating formula can occur in a wict-pattern.
\end{proof}
We will see in Proposition \ref{prop: caract. dp-minimal} that any \textrm{dp}-minimal \oag\ is VCA-minimal.

In fact, this is more general:

\begin{prop}\label{prop: la-VCA implica dprang le la}
If $T$ is $\la$-VCA then the $\dpr(T)\le\la$.
\end{prop}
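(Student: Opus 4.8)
The plan is to generalise the proof of Corollary~\ref{cor: VCA-minimal => dp-minimal} almost verbatim, the only change being that each reduced row now carries a family index drawn from $\la$ rather than from a singleton. By Proposition~\ref{prop: wict and special patterns compute dp-rank} the \dpra\ of $x=x$ is computed by wict-patterns, so it is enough to show that every wict-pattern for $x=x$ has depth at most $\la$. Fix one such pattern, say of depth $\ka$ and, by the standard argument, with \mui\ rows; let $\gen{\Psi_i(x,\ov y)\mid i\in\la}$ be the directed families witnessing that $T$ is $\la$-VCA.

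First I would normalise the sequence of formulas row by row. Every instance $\vf_i(\ov x,\ov a^i_j)$ defines a 1-set, which is a boolean combination of instances of formulas from $\bigcup_{i\in\la}\Psi_i$ and of NA-formulas; applying Fact~\ref{fet: finite boolean options} to the single formula $\vf_i$ shows that only finitely many such boolean shapes occur along the $i$-th row, so after thinning the row (using \muy) we may assume one fixed shape is used throughout. Putting that shape in disjunctive normal form and applying first Proposition~\ref{prop: no disjunctions} to select a disjunct and then Proposition~\ref{prop: no conjunctions} to select a conjunct, the row is reduced to a single literal without changing the depth: either a formula of some $\Psi_i$, the negation of a formula of some $\Psi_i$, or an NA-formula (the negation of an NA-formula is again NA). After doing this for all rows, each of the $\ka$ rows is tagged either with a single index $i\in\la$ or with the label ``NA''.

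It then remains to bound the number of rows. A non-alternating formula cannot occur in any wict-pattern, since its truth value along an indiscernible sequence is constant while a wict-pattern row demands a change from true to false; hence no row is tagged ``NA''. For the surviving rows, Proposition~\ref{prop: single-row} applied to each $\Psi_i$ forbids two distinct rows whose literals both come from $\Psi_i\cup\lnot\Psi_i$, so the map sending a row to its family index is injective into $\la$. Therefore $\ka\le\la$, and since the wict-pattern was arbitrary this yields $\dpr(T)\le\la$.

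I expect the step requiring the most care to be the normalisation, specifically checking that Fact~\ref{fet: finite boolean options} delivers only finitely many boolean shapes per row even though $\bigcup_{i\in\la}\Psi_i$ may be infinite: this is fine because the fact is applied to one formula $\vf_i$ of the fixed sequence at a time, and its conclusion concerns only the finitely many generating formulas appearing in a chosen boolean representation of that single formula. All the remaining difficulty is bookkeeping of the mutual-indiscernibility hypothesis through the successive thinnings, rather than any new idea.
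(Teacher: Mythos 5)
Your proof is correct and follows essentially the same route as the paper: the paper's own proof of this proposition is precisely the generalisation of Corollary \ref{cor: VCA-minimal => dp-minimal} via Fact \ref{fet: finite boolean options}, Propositions \ref{prop: no disjunctions}, \ref{prop: no conjunctions} and \ref{prop: single-row}, together with the observation that NA-formulas cannot occur in a wict-pattern. Your write-up simply makes explicit the row-by-row normalisation and the injectivity of the row-to-family-index map, which is exactly the intended argument.
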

\begin{proof}
As in Corollary \ref{cor: VCA-minimal => dp-minimal} using propositions \ref{prop: no conjunctions}, \ref{prop: no disjunctions}, \ref{prop: single-row} and the fact that NA-formulas cannot occur in a wict-pattern.
\end{proof}

\section{dp-minimal ordered groups}\label{sec: dp-minimal ordered groups}

Here we work again with the monster model of some complete theory.



The following is not the standard definition of the dual VC-density, but equivalent to it. See \cite{ADHMS:2}.

\begin{defn}\label{def: dual VC-density}
  The dual VC-density of a partitioned formula $\vf(\ov x,\ov y)$, denoted by $vc^*(\vf(\ov x,\ov y))$, is the infimum of all real numbers $r>0$ such that for any finite set $A$ of $\abs{\ov y}$-tuples $\abs{S^\vf (A)}=O(\abs{A}^r)$. Here $S^\vf (A)$ denotes the set of all maximally consistent sets of formulas of kind $\vf(\ov x,\ov a)$ or $\lnot\vf(\ov x,\ov a)$ with $\ov a\in A$.
\end{defn}

For more details on the dual VC-density, see \cite{ADHMS:2}.

In the proof of next proposition we use the following facts:

\begin{fets}\label{fets: sobre el cardinal de S^varphi(A) i els atoms d'una algebra de boole finita}
\begin{enumerate}
  \item $\abs{S^\vf (A)}$ coincides with the number of atoms of the Boolean algebra of sets generated by the definable sets $\vf(\C,\ov a)$ with $\ov a\in A$. Here $\C$ denotes the monster model (or any model) of the theory and $\vf(\C,\ov a)$ denotes the set defined by $\vf(\ov  x,\ov a)$ in $\C$.
  \item Let $B$ be a Boolean algebra and $B_1,B_2$ be given finite subalgebras (closed under $\land$ and $\lnot$) of $B$. Let us denote by $at(B_i)$ the number of atoms of $B_i$. Then the number of atoms of the subalgebra generated by $B_1\cup B_2$ is at most $at(B_1)at(B_2)$.
  \item Let $\tira An$ be a directed family of sets, i.e. if $A_i\cap A_j\ne\buit$ then either $A_i\se A_j$ or $A_j\se A_i$. Then the Boolean algebra of sets generated by $\tira An$ has at most $n+1$ atoms.
\end{enumerate}
\end{fets}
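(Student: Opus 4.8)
The plan is to prove the three items separately, each by an elementary count of atoms of a finite Boolean algebra, after first setting up the standard dictionary between consistent sets of (signed) instances and cells of a generated algebra.

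For item \emph{1}, I would identify a maximally consistent $q\in S^\vf(A)$ with a sign function $\ep\colon A\to\set{0,1}$, where $q$ decides, for each $\ov a\in A$, whether it contains $\vf(\ov x,\ov a)$ (code $0$) or $\lnot\vf(\ov x,\ov a)$ (code $1$). Writing $\vf(\C,\ov a)^0=\vf(\C,\ov a)$ and letting $\vf(\C,\ov a)^1$ be its complement, the maximal consistency of $q$ is, by the saturation of $\C$, equivalent to nonemptiness of the cell $\bigcap_{\ov a\in A}\vf(\C,\ov a)^{\ep(\ov a)}$. Since over all sign functions these cells are pairwise disjoint, cover $\C^{\abs{\ov x}}$, and every Boolean combination of the $\vf(\C,\ov a)$ is a union of them, the nonempty cells are exactly the atoms of the generated algebra. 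The assignment $q\mapsto$ its cell is then a bijection between $S^\vf(A)$ and the set of atoms, which gives the claimed equality.

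For item \emph{2}, I would use that a finite subalgebra is determined by its atoms, which are pairwise disjoint and join to the unit. Let $P_1,\dots,P_s$ be the atoms of $B_1$ and $Q_1,\dots,Q_t$ those of $B_2$, so $s=at(B_1)$ and $t=at(B_2)$. The nonzero meets $P_i\wedge Q_j$ are pairwise disjoint, join to the unit, and every element of $B_1\cup B_2$ is a join of such meets; hence they are precisely the atoms of $\gen{B_1\cup B_2}$. As there are at most $st$ of them, the bound $at(\gen{B_1\cup B_2})\le at(B_1)at(B_2)$ follows.

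For item \emph{3}, I would induct on $n$, the size of the directed family, assuming \wlogy\ each $A_i\ne\buit$. The base $n=0$ gives the algebra $\set{\buit,X}$ with at most one atom. For the step I would single out a member $A_n$ minimal under inclusion. Directedness forces each other $A_i$ to be either disjoint from $A_n$ or to contain $A_n$; consequently the cell $Q$ obtained by marking positively all sets containing $A_n$ and negatively all sets disjoint from it is a single atom of $\gen{\tira A{n-1}}$ with $A_n\se Q$. Every other atom is disjoint from $Q$, hence from $A_n$, so refining by $A_n$ can split only $Q$, into $A_n$ and $Q\sm A_n$, leaving all other atoms intact; the atom count therefore rises by at most one, and the inductive bound $\le n$ yields $\le n+1$. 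The main thing to get right — the only place the directedness hypothesis is genuinely used — is this claim that a single new set from a directed family splits at most one existing atom, which rests precisely on the observation that the chosen minimal $A_n$ sits inside one full cell of the previously generated algebra; the remaining steps are routine once the cell/atom correspondence of item \emph{1} is in place.
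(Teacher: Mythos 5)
Your proofs of all three items are correct. There is nothing in the paper to compare them against: the paper states these Facts without proof, treating them as standard, and only invokes them inside the proofs of Propositions \ref{prop: VCA minimal implica 1-vc-density=1} and \ref{prop: n-VCA implica 1-vc-density=n}. Your arguments --- the sign-pattern/cell bijection in item \emph{1}, counting the nonzero pairwise meets of atoms in item \emph{2}, and the induction on a minimal member of the directed family in item \emph{3} --- are exactly the routine arguments the author implicitly relies on, and each is sound; in particular the key claim in item \emph{3}, that adjoining a minimal member of a directed family can split at most one atom of the previously generated algebra, is precisely where directedness enters, and you justify it correctly. One cosmetic remark: in item \emph{1} the appeal to saturation of $\C$ is not actually needed, since $S^\vf(A)$ is only ever considered for finite $A$ (see Definition \ref{def: dual VC-density}), and consistency of a finite set of formulas is witnessed in any model containing the parameters --- which is why the statement can say ``or any model''.
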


\begin{prop}\label{prop: VCA minimal implica 1-vc-density=1}
In a VCA-minimal theory any formula $\vf(x,\ov y)$ has dual VC-density at most 1.
\end{prop}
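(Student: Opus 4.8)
The plan is to show $\abs{S^\vf(A)}=O(\abs A)$ for every finite set $A$ of $\abs{\ov y}$-tuples; by Definition \ref{def: dual VC-density} this yields $vc^*(\vf(x,\ov y))\le 1$. By Facts \ref{fets: sobre el cardinal de S^varphi(A) i els atoms d'una algebra de boole finita}.\emph{1} it suffices to bound, linearly in $\abs A$, the number of atoms of the Boolean algebra $B_A$ of sets generated by $\set{\vf(\C,\ov a)\mid\ov a\in A}$.

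First I would reduce every instance of $\vf$ to one of finitely many boolean-combination shapes. Let $\Psi(x,\ov y)$ be a directed family witnessing the $1$-VCA-ness of $T$, and let $\Si$ be the set of all finite boolean combinations of formulas taken from $\Psi$ and from NA-formulas of kind $\psi(x,\ov y)$ (each such combination being rewritten as a single formula with concatenated parameter variables). By VCA-minimality every definable $1$-set is an instance of some formula in $\Si$, so Fact \ref{fet: finite boolean options}, applied to $\Si$ and $\vf$, provides a finite $\Theta\se\Si$ such that every instance $\vf(\C,\ov a)$ is an instance of a formula in $\Theta$. In particular there is a fixed bound $N$ and a fixed finite pool of formulas, partitioned into $\Psi$-formulas and NA-formulas, so that each $\vf(\C,\ov a)$ is a boolean combination of at most $N$ sets, each an instance of a formula from this pool.

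Next I would separate the two kinds of generators and count atoms. Put $m=\abs A$. Over all $\ov a\in A$ at most $Nm$ instances are used. Those arising from $\Psi$-formulas form a directed family of sets by Definition \ref{def: directed family}, so by Facts \ref{fets: sobre el cardinal de S^varphi(A) i els atoms d'una algebra de boole finita}.\emph{3} they generate a subalgebra $B_1$ with at most $Nm+1$ atoms. Those arising from the NA-formulas of the pool take only finitely many distinct values as sets, since each NA-formula has dual-alternation number zero and hence only finitely many instance-sets; therefore they generate a subalgebra $B_2$ whose number of atoms is bounded by a constant $C$ depending on $\Theta$ alone. As $B_A$ sits inside the algebra generated by $B_1\cup B_2$, Facts \ref{fets: sobre el cardinal de S^varphi(A) i els atoms d'una algebra de boole finita}.\emph{2} gives $at(B_A)\le at(B_1)\,at(B_2)\le(Nm+1)C=O(m)$. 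Hence $\abs{S^\vf(A)}=O(\abs A)$ and $vc^*(\vf(x,\ov y))\le 1$.

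The step needing the most care is the reduction through Fact \ref{fet: finite boolean options}: one must extract a single uniform bound $N$ and a single finite pool of primitive formulas valid for all instances of $\vf$, so that the split into a directed part (contributing $O(m)$ atoms) and a non-alternating part (contributing $O(1)$ atoms) is legitimate. Once that is in place, the conclusion is the routine atom-counting combination of Facts \ref{fets: sobre el cardinal de S^varphi(A) i els atoms d'una algebra de boole finita}.\emph{2} and \emph{3}.
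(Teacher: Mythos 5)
Your proposal is correct and follows essentially the same route as the paper's proof: both apply Fact \ref{fet: finite boolean options} to reduce all instances of $\vf$ to instances of a finite set $\Theta$ of Boolean combinations of $\Psi$-formulas and NA-formulas, then count atoms by splitting into the directed part (linearly many atoms, via Facts \ref{fets: sobre el cardinal de S^varphi(A) i els atoms d'una algebra de boole finita}.\emph{3}) and the non-alternating part (constantly many atoms, since each NA-formula defines only finitely many sets), and multiply via Facts \ref{fets: sobre el cardinal de S^varphi(A) i els atoms d'una algebra de boole finita}.\emph{2}. The only cosmetic difference is bookkeeping: the paper's bound is $(\abs{\Psi_0}\abs{A}+1)2^{N^{\abs{\Upsilon}}}$ while yours is $(N\abs{A}+1)C$, which are the same estimate up to constants.
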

\begin{proof} By fact \ref{fet: finite boolean options}, for each formula $\vf(x,\ov z)$ there is a finite set $\Theta$ of formulas of kind $\mu(x,\ov u)$ which are Boolean combinations of formulas in $\Psi$ and NA-formulas, such that each instance of $\vf(x,\ov z)$ is equivalent to an instance of some formula in $\Theta$. Let $\Psi_0$ denote respectively the set of formulas from $\Psi$ occurring in the boolean combinations of formulas in $\Theta$ and let $\Upsilon$ be the set of NA-formulas occurring in the boolean combinations of formulas in $\Theta$. Let $N$ be a common upper bound of the number of different sets each formula in $\Upsilon$ can define. We may assume all formulas in $\Psi,\Theta$ and $\Upsilon$ have the same parameter variables, say $\ov u$.

Given a set of $\ov z$-parameters $A$, we can choose a set of  $\ov u$-parameters $B$ of size at most $\abs{A}$ such that each instance of $\vf(x,\ov z)$ with parameters from $A$ is an instance of some formula in $\Theta$ with parameters from $B$. Hence, any definable set $\vf(\C,\ov a)$ with $\ov a\in A$ is a boolean combination of sets of kind $\psi(\C,\ov b)$ where $\psi(x,\ov u)\in \Psi\cup\Upsilon$ and $\ov b\in B$.

Now it is not difficult to see that $\abs{S_{\vf}(A)}\le
(\abs{\Psi_0}\abs{A}+1)2^{N^{\abs{\Upsilon}}} $. This holds because the boolean algebra generated by the sets defined by $\Psi_0$-formulas with parameters from $B$ has at most $\abs{\Psi_0}\abs{A}+1$ atoms. And the Boolean algebra generated by $\Upsilon$-formulas with parameters in $B$  has at most $2^{N^{\abs{\Upsilon}}}  $ atoms becase there are at most $N^{\abs{\Upsilon}}$ such nonequivalent formulas.
\end{proof}

Here is the complete characterization of all \textrm{dp}-minimal\ ordered groups.

We say that an \oag\ has \textbf{finite dimension} iff $\dimp(G)$ is finite for all prime $p$.

\begin{prop}\label{prop: caract. dp-minimal}
Let $G$ be an ordered group. \Tfae
\begin{enumerate}
  \item $G$ is VCA-minimal.
  \item Any formula $\vf(x,\ov y)$  has dual VC-density at most 1 (in the theory of $G$).
  \item $G$ is\ \textrm{dp}-minimal.
  \item $G$ is Abelian and has finite dimension. 
\end{enumerate}
\end{prop}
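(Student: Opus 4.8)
The overall plan is to reduce to the abelian case and then run a short cycle. Each of $(1)$, $(2)$, $(3)$ forces $G$ to be abelian (dp-minimal, and \emph{a fortiori} VC-density-one, ordered groups are abelian; this I would import, e.g.\ from \cite{jahnke_simon_walsberg_2017}), so it suffices to prove, for abelian $G$, that $(1)$, $(2)$, $(3)$ and ``finite dimension'' are all equivalent. The two arrows $(1)\Rightarrow(2)$ and $(1)\Rightarrow(3)$ are already available: they are Proposition~\ref{prop: VCA minimal implica 1-vc-density=1} and Corollary~\ref{cor: VCA-minimal => dp-minimal}. I would therefore prove the single positive implication ``finite dimension $\Rightarrow(1)$'' and the two lower bounds ``$(2)\Rightarrow$ finite dimension'' and ``$(3)\Rightarrow$ finite dimension'', yielding the cycles finite dimension $\Rightarrow(1)\Rightarrow(2)\Rightarrow$ finite dimension and finite dimension $\Rightarrow(1)\Rightarrow(3)\Rightarrow$ finite dimension.

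For ``finite dimension $\Rightarrow(1)$'' the first step is to set up the quantifier elimination. By point~\emph{5} of Remark~\ref{rem: rang n-regular finit}, finite dimension forces each $p$-regular rank to be at most $\dimp(G)+1$, hence finite; by Proposition~\ref{prop: caracteritz. de regular rank bounded} this is exactly bounded regular rank, so Theorem~\ref{thm: QE} applies and every definable $1$-set is a Boolean combination of instances of the atomic $L$-formulas. The heart of the matter is to sort these atoms into one directed family plus non-alternating formulas. The key observation is that in a linear order any two downward-closed sets are $\subseteq$-comparable, so \emph{any} family of formulas whose instances are all initial segments is automatically directed; this is what resolves the apparent obstruction that order-rays and convex-subgroup cosets, taken separately, are \emph{not} jointly directed. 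I would take $\Psi$ to consist of $x\le y$, $x<y$, and, for each $\De\in\rj(G)$, the formulas $(x\le y)\vee(x\equiv y\bmod\De)$ and $(x<y)\wedge(x\not\equiv y\bmod\De)$, which define the initial segments $\{z:\,z+\De\le y+\De\}$ and $\{z:\,z+\De< y+\De\}$ in $G/\De$. Every order atom is then an instance of $\Psi$, and each convex-subgroup congruence $x\equiv b\bmod\De$ is the difference of two $\Psi$-instances. Finally the modular atoms $x\equiv y\bmod(\De+p^mG)$ are non-alternating: finite dimension makes $G/pG$, hence $G/p^mG$ and every quotient $G/(\De+p^mG)$, finite, so such a formula has only finitely many instances. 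This displays $G$ as $1$-VCA, i.e.\ VCA-minimal.

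For the two lower bounds I would argue contrapositively: if $\dimp(G)=\infty$ for some prime $p$, then $G$ is neither dp-minimal nor of dual VC-density $\le 1$. With infinitely many $\F_p$-independent classes modulo $pG$ one has room to \emph{decouple} the $pG$-coset of a point from its position in the order — which is exactly what fails in finite dimension, where the finitely many cosets are entangled with the order (as the dp-minimality of $\Z\oplus\Z$ ordered lexicographically, despite $\dimp=2$, illustrates). Concretely I would build two mutually indiscernible sequences, one varying the $pG$-class and one varying in the order, together with a single element splitting both; after passing through Proposition~\ref{prop: single-row} and the reductions of Propositions~\ref{prop: no disjunctions} and \ref{prop: no conjunctions} this produces an ict-pattern of depth $2$, so $G$ is not dp-minimal, while a parallel count of the Boolean algebra generated by cosets shows some formula attains dual VC-density strictly above $1$.

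I expect the genuine obstacle to be precisely this last direction: converting ``$\dimp(G)=\infty$'' into an honest depth-$2$ configuration. The delicate point is mutual indiscernibility — one must produce the two sequences so that the splitting element can realize a prescribed congruence condition \emph{independently} of its cut in the order, and checking that all resulting paths are consistent is where a chain-condition style lemma (of the kind developed for the strong case in Section~\ref{sec: Chain conditions}) does the real work. By contrast ``finite dimension $\Rightarrow(1)$'' is essentially bookkeeping once one notices that the correct directed family is made of initial segments rather than of rays or cosets separately.
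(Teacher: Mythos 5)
Your architecture (reduce to the abelian case, reuse Proposition~\ref{prop: VCA minimal implica 1-vc-density=1} and Corollary~\ref{cor: VCA-minimal => dp-minimal}, and close the cycle through ``finite dimension'') is logically sound, and your treatment of ``finite dimension $\Rightarrow$ VCA-minimal'' follows the paper's mechanism, but there are two genuine gaps. The first is in your directed family: Theorem~\ref{thm: QE} produces atoms $nx\le t(\ov y)$ and $nx\equiv t(\ov y)\bmod\De$ with coefficients $n\ge 2$, and such an atom need not be a Boolean combination of instances of your $\Psi$. For example, in the monster model of the theory of $\Z_{(2)}$ (rationals with odd denominator, where $\rj$ reduces to $\set{\ze}$, so your coset formulas collapse to $x\le y$ and $x<y$), the set $\set{a \mid 2a\le c}$ for $c\notin 2G$ is downward closed but its bounding cut is unrealized, whereas any Boolean combination of your instances is a finite union of intervals with realized endpoints; and this formula is not NA, so it cannot be absorbed into the non-alternating part either. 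The fix is immediate and follows from your own key observation: put $(nx\le y)\vee(nx\equiv y\bmod\De)$ and $(nx<y)\wedge(nx\not\equiv y\bmod\De)$ into $\Psi$ for every $n\ge 1$ (all instances are still initial segments, so the family is still directed) --- or, as the paper does, take $\Psi$ to be simply all formulas whose instances define initial segments.

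The second gap is the one you flag yourself, and your pointers for filling it are misdirected. Propositions~\ref{prop: single-row}, \ref{prop: no disjunctions} and \ref{prop: no conjunctions} are upper-bound tools --- they eliminate or simplify rows of a given pattern --- and cannot help you build one; likewise the chain-condition results of Section~\ref{sec: Chain conditions} concern families of subgroups and cannot accommodate an order row. No such machinery is needed: the paper's construction is explicit. Choose positive $(b_j)_{j\in\om}$ pairwise incongruent mod $pG$ and $a$ greater than every $b_j$ (saturation), and take the rows $(x> i pa\mid i\in\om)$ and $(x\equiv b_i \bmod pG\mid i\in\om)$; the element $b_{f(2)}+f(1)pa$ witnesses the path $f$, because adding multiples of $pa$ preserves the class mod $pG$ while every $b_j$ lies in $(0,a)$. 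Note that this is a wict-pattern and not an ict-pattern: the sets $\set{x \mid x>ipa}$ are nested, so the two-sided negation condition fails for $j<f(1)$ --- which is exactly why the paper introduces wict-patterns and proves Proposition~\ref{prop: wict and special patterns compute dp-rank} (alternatively, replace the rays by the pairwise disjoint intervals $ipa<x<(i+1)pa$ to get an honest ict-pattern). Your sketch, which asks for mutually indiscernible sequences feeding directly into an ict-pattern, would hit precisely this obstruction. Finally, for ``$(2)\Rightarrow$ finite dimension'' you would still need a separate count showing that infinite $\dimp(G)$ forces some formula to have dual VC-density $>1$; this can be avoided altogether by deducing $(2)\Rightarrow(3)$ from \cite{DGL} (as the paper does) or from Proposition~\ref{prop: dprank T le vcdensitat T}, after which only the dp-rank lower bound above is needed.
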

\begin{proof}
\emph{1} implies \emph{2} By proposition \ref{prop: VCA minimal implica 1-vc-density=1}.

\emph{2} implies \emph{3}. By \cite{DGL} Proposition 3.2.

\emph{3} implies \emph{4}.
$G$ is abelian by Proposition 3.3 of \cite{Sim:Dp-min}.
Assume now $G$ is Abelian but $\dim_p(G)$ is infinite for some prime $p$. As $[G:pG]$ is infinite, let $(b_j)_{j\in\om}$ be an infinite set of positive elements non congruent modulo $pG$. Let $a$ be an element greater than any $b_j$ (in the monster model!).   Then the following is a wict-pattern of depth 2: $(x>ipa \mid i\in\om)$, $(x\equiv b_i \mod p \mid i\in\om)$.

\emph{4} implies \emph{1}.
Now assume that for each prime number $p$, $\dim_p(G)$ is finite. Then, by Remark 5 in \ref{rem: rang n-regular finit}, each $p$-regular rank should be finite.  By Theorem \ref{thm: QE}, any formula can be written as a boolean combination of formulas of the following kind:
\begin{eqnarray}
\label{eq: atoms proof strong 01}
  nx &\le & t(\ov y) \\
\label{eq: atoms proof strong 02}
  nx &\equiv & t(\ov y) \mod \De    \\
\label{eq: atoms proof strong 03}
  nx &\equiv & t(\ov y) \mod \De +p^mG   \\
\nonumber
\end{eqnarray}
where $n\in \Z$, $\De\in\rjp(G),\ p\in\PP$,  $m\ge 1$ and $t(\ov y)$ is a term.

Since the formulas of kind \eqref{eq: atoms proof strong 01} and \eqref{eq: atoms proof strong 02} define convex subsets, they are  boolean combination of definable initial segments. So we may express any formula as a boolean combination of definable initial segments and formulas of kind \eqref{eq: atoms proof strong 03}. As the formulas defining initials segments constitute a directed family it only remains to prove that formulas of kind \eqref{eq: atoms proof strong 03} are NA.

\begin{claim}\label{cl: cosets Delta + p^kG}
Every nonempty instance of the formula $nx\equiv t(\ov y) \mod \De+p^mG$ is a coset of $\De+p^rG$, where $n=n'p^s$, $\gcd(n',p)=1$ and $r=\min\set{m-s,0}$.
\end{claim}
\begin{proof}
easy.
\end{proof}
Finally
 \begin{claim}\label{cl: p-dim finita implica la formula .... es NA}
 If $\dim_p(G)$ is finite, the formula
 $$nx\equiv t(\ov y)\mod \De+p^mG$$ is  NA.
 \end{claim}
 \begin{proof}
By claim \ref{cl: cosets Delta + p^kG}, it suffices to show $[G:\De+p^rG]$ is finite.

Since $G/\De\Big/p(G/\De)\simeq G/(\De+pG)$ is a free $\F_p$-module of rank $\dim_p(G/\De)$ then  $G/\De\Big/p^r(G/\De)\simeq G/(\De+p^rG)$ is a free $\Z\big/p^r\Z$-module of rank $\dim_p(G/\De)$ (see \cite{Kap}). This implies $[G:\De+p^rG]=p^{r\dim_p (G/\De)}\le p^{r\dim_p (G)}$, which is finite.
\end{proof}
\end{proof}
Observe that Corollary \ref{cor: VCA-minimal => dp-minimal} gives another proof that \emph{1} implies \emph{3}.

We cannot replace VCA-minimal by VC-minimal. In \cite{Flenn-Guin} it is shown that any convexly orderable \oag\ is divisible. Moreover any VC-minimal theory is convexly orderable (see \cite{Gui-Las} for a proof of this). This shows that there are many VCA-minimal \oag s non convexly orderable.

\section{Chain conditions}\label{sec: Chain conditions}

We start with Lemma \ref{lem: TXR en grups}. It is a result about pure groups and it is a generalization of the Chinese Remainder Theorem to the non-Abelian case. We assume there is some group $G$, $a,b$ are elements of $G$ and $H$ is a subgroup of $G$. We use the congruence notation to work with left cosets: $a\equiv b \bmod H$ means $aH=bH$.

If $H_1$ and $H_2$ are subgroups of $G$, we let $H_1H_2$ denotes the following  set: $\set{h_1h_2\mid h_1\in H_1, h_2\in H_2}$. Obviously $H_1H_2$ is a subgroup iff $H_1H_2=H_2H_1$.  Observe that the condition \eqref{eq: ccc} depends on the order of the sequence of groups, while the conclusion not.
\begin{lem}\label{lem: TXR en grups}
Let $G$ be a group, $\tira Hn$ a sequence of subgroups of $G$ satisfying the following:

\begin{equation}\label{eq: ccc}
    \bigcap_{i<r}(H_iH_r) =\left( \bigcap_{i<r}H_i\right)H_r \text{ \ \  for }r=2\ldots n
\end{equation}
Then, given $\tira an\in G$, the system
\begin{equation}\label{eq: sistema xines, not. congruencies}
    \begin{cases}
    x \equiv a_1 & \bmod\ H_1 \\
  \hfill  \vdots &  \\
    x \equiv a_n & \bmod \ H_n \\
    \end{cases}
\end{equation}
has a solution iff for any $i<j\le n$ it happens that $a_i^{-1} a_j\in H_iH_j$.

Moreover, if $b$ is a solution of \eqref{eq: sistema xines, not. congruencies}, the system \eqref{eq: sistema xines, not. congruencies} is equivalent to
$$
x\equiv b \mod \bigcap_{i\le n}H_i.
$$
\end{lem}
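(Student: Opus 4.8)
The plan is to prove the equivalence by first dispatching necessity, which requires no hypothesis, then establishing sufficiency by induction on $n$ with the two-subgroup case as the engine that folds in one congruence at a time; the ``moreover'' clause falls out from elementary coset bookkeeping and is independent of the hypothesis. For \textbf{necessity}, suppose $x$ solves \eqref{eq: sistema xines, not. congruencies} and write $x=a_ih_i$ with $h_i\in H_i$ for each $i$. Then for $i<j$ we get $a_i^{-1}a_j=(a_i^{-1}x)(x^{-1}a_j)=h_ih_j^{-1}\in H_iH_j$, exactly as required. For the \textbf{moreover} clause, if $b$ solves the system then $a_iH_i=bH_i$ for every $i$, so the solution set $\bigcap_{i\le n}a_iH_i$ equals $\bigcap_{i\le n}bH_i=b\bigcap_{i\le n}H_i$, which is precisely the set of solutions of $x\equiv b \bmod \bigcap_{i\le n}H_i$.

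The \textbf{two-subgroup case} is the base of the combining mechanism and needs no chain condition (for $n=2$ the hypothesis \eqref{eq: ccc} reads $H_1H_2=H_1H_2$ and is vacuous): given $a_1^{-1}a_2\in H_1H_2$, write $a_1^{-1}a_2=h_1h_2$ with $h_i\in H_i$ and set $b:=a_1h_1$. Since $h_1=a_1^{-1}a_2h_2^{-1}$ we have $b=a_1h_1=a_2h_2^{-1}$, so $b\in a_1H_1$ and $b\in a_2H_2$; thus $b$ solves the pair, and by the moreover clause the pair is equivalent to $x\equiv b \bmod H_1\cap H_2$.

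For the \textbf{inductive step}, assume the statement for $n-1$ and let $a_1,\dots,a_n$ satisfy all pairwise conditions. The subgroups $H_1,\dots,H_{n-1}$ inherit \eqref{eq: ccc} for $r\le n-1$, so by induction there is a common solution $b$ of the first $n-1$ congruences, and by the moreover clause at stage $n-1$ these are equivalent to the single congruence $x\equiv b \bmod K$, where $K:=\bigcap_{i<n}H_i$. The whole system \eqref{eq: sistema xines, not. congruencies} is therefore equivalent to the two-subgroup system $x\equiv b \bmod K$, $x\equiv a_n \bmod H_n$, which is solvable exactly when $b^{-1}a_n\in KH_n$. Here the chain condition enters: \eqref{eq: ccc} at $r=n$ gives $KH_n=\bigcap_{i<n}(H_iH_n)$, so it suffices to verify $b^{-1}a_n\in H_iH_n$ for each $i<n$. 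Writing $b=a_ik_i$ with $k_i\in H_i$ (possible since $b\in a_iH_i$) and using $a_i^{-1}a_n\in H_iH_n$ from the hypothesis, we obtain $b^{-1}a_n=k_i^{-1}(a_i^{-1}a_n)\in H_iH_n$. This yields a solution, and the two-subgroup moreover clause then identifies the solution set as a coset of $K\cap H_n=\bigcap_{i\le n}H_i$, completing the induction.

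The only delicate point, and the main obstacle, is non-commutativity: one must consistently track left cosets and resist reordering products, and recognize that \eqref{eq: ccc} is precisely the identity converting the product-of-intersection $KH_n$ demanded by the two-subgroup solvability criterion into the intersection-of-products $\bigcap_{i<n}(H_iH_n)$ that the pairwise hypotheses directly supply. Everything else is routine coset manipulation.
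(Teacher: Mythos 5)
Your proof is correct and follows essentially the same route as the paper's: necessity and the ``moreover'' clause by direct coset computation, then sufficiency by induction on $n$, using the $n=2$ case to fold in the last congruence and invoking hypothesis \eqref{eq: ccc} exactly where the paper does, to identify $\bigl(\bigcap_{i<n}H_i\bigr)H_n$ with $\bigcap_{i<n}(H_iH_n)$ so that the pairwise conditions suffice. The only differences are cosmetic (indexing of the induction and an explicit appeal to the ``moreover'' clause to compress the first $n-1$ congruences into one).
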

\begin{proof}
If $c$ is a solution of \eqref{eq: sistema xines, not. congruencies} then $a_i^{-1}c\in H_i$ and $a_j^{-1}c\in H_j$, so $a_i^{-1}a_j\in H_iH_j$.
If $b$ is a particular solution of \eqref{eq: sistema xines, not. congruencies}, $c$ is a solution of \eqref{eq: sistema xines, not. congruencies} iff $c\equiv b\mod H_i$ for all $i$ iff $c\equiv b\mod \bigcap_{i\le n}H_i$.
The proof of the existence of a solution in case the compatibility condition is satisfied, is by induction on $n$.

Case $n=2$.  If $a_1^{-1} a_2\in H_1H_2$ then $a_1^{-1}a_2=h_1h_2$ for some $h_i\in H_i$. Hence $b=a_1h_1=a_2h_2^{-1}$ is a solution.

Now we prove the inductive step. Assume the compatibility conditions $a_i^{-1} a_j\in H_iH_j$ holds for any $i<j\le n+1$. By inductive hypothesis, the system of the first $n$ congruencies has a solution, say $b$. It suffices to prove that the system
\begin{equation}\label{eq: sistema inductiu 2}
    \begin{cases}
    x \equiv b & \mod \bigcap_{i\le n}H_i \\
    x \equiv a_{n+1} & \mod H_{n+1} \\
    \end{cases}
\end{equation}
Has a solution. By the case with $n=2$ it suffices to show that $b^{-1}a_{n+1}\in\left(\bigcap_{i\le n}H_i\right)H_{n+1}$. Since $b$ is a solution of the first $n$ congruencies $b=a_ih_i$ for some $h_i\in H_i$ and for any $i\le n$. Hence $b^{-1}a_{n+1}=(h_i^{-1}a_i^{-1}a_{n+1})\in H_iH_{n+1}$. Therefore $b^{-1}a_{n+1}\in \bigcap_{i\le n}(H_iH_{n+1})=\left(\bigcap_{i\le n}H_i\right)H_{n+1}$ by \eqref{eq: ccc}.
\end{proof}

\begin{rem}In the previous lemma, if moreover for any $1\le i<j\le n$ one gets $H_iH_j=H_jH_i$, then all `sets' in the statement are subgroups: $H_iH_j$, and $\left(\bigcap_{i<r}H_i\right)H_r$ \ for $r=2\ldots n$. Moreover$\left(\bigcap_{i<r}H_i\right)H_r=H_r\left(\bigcap_{i<r}H_i\right)$ for $r=2\ldots n$.
\end{rem}

The main results of this section are Propositions \ref{prop: cota inferior burden en grups, cas finit}, \ref{prop: cota inferior burden en grups, cas numerable} and \ref{prop: cota inferior burden en grups, cas infinit mateixa formula}.They are stated in terms of burden, but one can replace burden by \dpra\ and remain true. This is because the burden is always smaller or equal to  \dpra. More precisely, if a certain type has an inp-pattern of a certain depth the same type also has an ict-pattern of the same depth. But in fact, in the proofs, we exhibit patterns which are ict and inp at the same time. For the sake of completeness we include a definition of inp-pattern and burden. Here again we work in the monster model of some complete theory. It is aso known that burden and \dpra\ coincide for NIP types, see \cite{Adl:Strong}.
\begin{defn}
Let $p(\ov x)$ be a partial type. An \textbf{inp-pattern} for $p(\ov x)$ consist in the following data: a sequence of formulas $(\vf(\ov x,\ov y_i) \mid i\in k)$, a sequence of natural numbers $(k_i \mid i\in k)$ and an array $A:=(\ov a_j^i \mid i\in\ka,\ j\in O)$, where $\ka$ is a cardinal number and $O$ is an infinite linearly ordered set \st:\begin{itemize}
      \item for every $f\in O^\ka$, the following set of formulas is consistent: $$p(\ov x)\cup
\set{\vf_i(\ov x,\ov a_{f(i)}^i) \mid i\in \ka}$$
      \item for each $i\in\ka$ the set $\set{\vf(\ov x,a_i^j)\mid j\in O}$ is $k_i$-inconsistent.
    \end{itemize}
The cardinal number $\ka$ is called the depth of the pattern and we allow $\ka$ to be finite. We also say that it is an inp-pattern of type $\ka\times O$. 

The \textbf{burden} of $p(\ov x)$ denoted by $\bdn(p(\ov x))$ is the supremum of all depths of all inp-patterns for $p(\ov x)$.

If $T$ denotes a complete theory with a main sort \footnote{there is a sort, called main sort, such that all other sorts are obtained as sorts of imaginaries of the theory of the main sort alone} the burden of $T$, denoted by  $bdn(T)$ is  $\bdn(x=x)$ where $x$ is a single variable for the main sort.

 A complete theory $T$ is called \textbf{strong} (see \cite{Adl:Strong}) iff there is no inp-pattern of infinite depth for the formulas $\ov x=\ov x$, where $\ov x$ is of finite length. By the submultiplicativity of burden (see \cite{Chernikov-NTP2}) it is enough to consider a single variable.

Shelah in \cite{Sh:863} calls $T$ strongly dependent iff there in no ict-pattern of infinite depth for the formulas $\ov x=\ov x$ with $\ov x$ of finite length. It is enough again to consider a single variable. A theory is strongly dependent iff it is NIP and strong, see \cite{Adl:Strong}. As all \oag s are NIP (see \cite{Gu-Schm}) strong and strongly dependent are equivalent properties for \oag s.

When $\dpr(T)$ is finite we say that $T$ has \textbf{finite \dpra} (in \cite{Kaplan-Shelah:Chain2013} it is said that the theory has bounded \dpra). Obviously any theory of finite \dpra\ is strong. 
In section \ref{sec: Strong oag s} we will see that any strong \oag\ has finite \dpra.

\end{defn}

\begin{prop}\label{prop: cota inferior burden en grups, cas finit}
Let $G$ be an expansion of a group. Assume there is a sequence of definable subgroups (maybe with parameters) $\tira Hn$ satisfying the following three conditions:
\begin{description}
  \item[commutativity:]   $H_iH_j=H_jH_i$ for $1\le i<j\le n$.
  \item[distributivity:]    $ \bigcap_{i<r}(H_iH_r)=\left( \bigcap_{i<r}H_i\right)H_r \text{ for }r=2\ldots n$
  \item[infinity:] $[K_i:H_i]$ is infinite for $r=1\ldots n$.
\end{description}
where $K_i$ denotes $\ds\bigcap_{\substack{j=1\\ j\ne i}}^nH_jH_i$.

Then $\bdn(G)\ge n$.

\end{prop}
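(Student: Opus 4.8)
The plan is to exhibit an explicit inp-pattern of depth $n$ for the formula $x=x$, using the subgroups $H_1,\dots,H_n$ as the basis for the rows. Row $i$ will consist of instances of the coset-membership formula $\vf_i(x,\ov y):=(x\equiv y\bmod H_i)$, i.e. $x\in yH_i$. The infinity condition tells us that $[K_i:H_i]$ is infinite, so I would first choose, for each row $i$, a sequence of parameters $(\ov a^i_j\mid j\in\om)$ whose $H_i$-cosets $\ov a^i_jH_i$ are pairwise distinct and all lie inside $K_i$. The key structural facts to verify are the two requirements of an inp-pattern: horizontal inconsistency within each row, and consistency along every path.

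For the horizontal $k_i$-inconsistency, since distinct parameters in row $i$ define distinct cosets of $H_i$, any two formulas $x\equiv a^i_j\bmod H_i$ and $x\equiv a^i_{j'}\bmod H_i$ with $j\ne j'$ are already inconsistent; so the row is $2$-inconsistent and I can take $k_i=2$. The real content is the path-consistency: given any $f\in\om^n$, the system
$$
\begin{cases}
x\equiv a^1_{f(1)} &\bmod H_1\\
\ \ \vdots &\\
x\equiv a^n_{f(n)} &\bmod H_n
\end{cases}
$$
must be consistent. This is exactly where Lemma \ref{lem: TXR en grups} enters: the \textbf{commutativity} hypothesis guarantees the products $H_iH_j$ are subgroups, and the \textbf{distributivity} hypothesis is precisely the condition \eqref{eq: ccc} needed to apply the lemma. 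By that lemma, the system has a solution iff the pairwise compatibility $(a^i_{f(i)})^{-1}a^j_{f(j)}\in H_iH_j$ holds for all $i<j$. So the heart of the argument is to arrange, while choosing the parameters, that every such pairwise compatibility is automatically satisfied.

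To secure the compatibility I would exploit the definition $K_i=\bigcap_{j\ne i}H_jH_i$. The idea is to choose all the parameters $\ov a^i_j$ to be representatives of distinct $H_i$-cosets lying in $K_i$; then for any $i<j$ one has $a^i_{f(i)}\in K_i\se H_jH_i=H_iH_j$ and likewise $a^j_{f(j)}\in K_j\se H_iH_j$ (using commutativity $H_iH_j=H_jH_i$), whence the product $(a^i_{f(i)})^{-1}a^j_{f(j)}\in H_iH_j$ since $H_iH_j$ is a subgroup. Thus every path is consistent by Lemma \ref{lem: TXR en grups}, while each row is $2$-inconsistent, giving an inp-pattern of depth $n$ and hence $\bdn(G)\ge n$. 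The main obstacle, and the delicate point to get right, is the bookkeeping in the parameter choice: one must simultaneously keep the cosets within each row distinct (for inconsistency, guaranteed by $[K_i:H_i]$ infinite) while keeping every element inside the appropriate $K_i$ (for cross-row compatibility). Since $[K_i:H_i]$ is infinite there are infinitely many distinct $H_i$-cosets inside $K_i$ to pick from, so this choice can be made, and I would also remark that the same array witnesses an ict-pattern of depth $n$ (each path being consistent and the single positive formula in each row excluding the others), matching the paper's comment that the exhibited patterns are simultaneously inp and ict.
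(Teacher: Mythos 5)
Your proposal is correct and follows essentially the same route as the paper's own proof: pick parameters in $K_i$ representing pairwise distinct $H_i$-cosets (possible by \textbf{infinity}), note that disjointness of distinct cosets gives $2$-inconsistency in each row, and invoke Lemma \ref{lem: TXR en grups} (with \textbf{commutativity} making each $H_iH_j$ a subgroup and \textbf{distributivity} supplying the hypothesis of that lemma) to get path consistency from $a^i_{f(i)}\in K_i\se H_jH_i=H_iH_j$ and $a^j_{f(j)}\in K_j\se H_iH_j$. Your closing remark that the same array is simultaneously an ict-pattern also matches the paper's comment at the start of its chain-conditions section.
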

\begin{proof}
 By \textbf{infinity} we can choose $ a_j^i$, for $1\le i\le n,\ j\in\om$ \st $a_j^i\in K_i$ but $a_j^i\not\equiv a_{j'}^i \mod H_i$ for $j<j'$. Lets check that the formulas $x\equiv  a_j^i \mod H_i$ constitute an inp-pattern with all rows 2-inconsistent. Given $f\in\om^n$, we apply Lemma \ref{lem: TXR en grups} to check that The following system
\begin{equation}\label{eq: sistema xines 2}
    \begin{cases}
    x \equiv a_{f(1)}^1 & \mod H_1 \\
  \hfill  \vdots &  \\
    x \equiv a_{f(n)}^n & \mod H_n \\
    \end{cases}
\end{equation}
has a solution.  Observe that for $i\ne j$,  $a_{f(i)}^i\in K_i\se H_jH_i$ and $a_{f(j)}^j\in K_j\se H_iH_j$ implies ${a_{f(i)}^i}^{-1}a_{f(j)}^j\in H_iH_j$ because of the commutativity condition.
\end{proof}

\begin{prop}\label{prop: cota inferior burden en grups, cas numerable}
Let $G$ be an expansion of a group. Assume there is a sequence of definable subgroups (maybe with parameters) $(H_i\mid i\in \om)$ satisfying the following three conditions:
\begin{description}
  \item[commutativity:]   $H_iH_j=H_jH_i$ for $i,j\in\om$.
  \item[distributivity:]    $ \bigcap_{i<n}(H_iH_n)=\left( \bigcap_{i<n}H_i\right)H_n \text{ for }n\in \om$
  \item[infinity:] $[K^n_i:H_i]$ is infinite for $i\le n\in\om$.
\end{description}
where $K^n_i$ denotes $\ds\bigcap_{\substack{j=1\\ j\ne i}}^nH_jH_i$.

Then $G$ is not strong.
\end{prop}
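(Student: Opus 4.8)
The plan is to produce a single inp-pattern of infinite depth for $x=x$ (a single variable of the group sort) by gluing together the finite patterns supplied by Proposition~\ref{prop: cota inferior burden en grups, cas finit}, via compactness. The starting observation is that for every $n\in\om$ the initial segment $\tira Hn$ satisfies all three hypotheses of Proposition~\ref{prop: cota inferior burden en grups, cas finit}: \textbf{commutativity} and \textbf{distributivity} are just the restrictions to indices $\le n$ of the hypotheses we are given, and the subgroup $K_i$ that proposition attaches to the family $\tira Hn$ is exactly our $K^n_i$, so its \textbf{infinity} hypothesis is precisely the assumption that $[K^n_i:H_i]$ is infinite for $i\le n$. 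Hence for each $n$ there is an inp-pattern of depth $n$ whose $i$-th row uses the formula $x\equiv y\bmod H_i$ with all rows $2$-inconsistent.

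Since being strong forbids a \emph{single} inp-pattern of infinite depth, having patterns of every finite depth is not by itself enough, so I would assemble an infinite one by compactness. Consider the array of variables $(y^i_j\mid i,j\in\om)$ and the type $\Si$ over the (at most countably many) parameters defining the $H_i$ which asserts: (i) $y^i_j\not\equiv y^i_{j'}\bmod H_i$ for all $i$ and all $j\ne j'$; and (ii) for every $n$ and every $f\colon\set{1,\dots,n}\to\om$ the formula $\exists x\bigwedge_{i\le n}\bigl(x\equiv y^i_{f(i)}\bmod H_i\bigr)$ holds. A realization $(a^i_j)$ of $\Si$ is precisely the array of an inp-pattern of depth $\om$: each row is $2$-inconsistent by (i), while an arbitrary path $\set{x\equiv a^i_{f(i)}\bmod H_i\mid i\in\om}$ is consistent because any finite subset of it lies inside an initial-segment system $\set{x\equiv a^i_{f(i)}\bmod H_i\mid i\le n}$, whose consistency is exactly condition (ii).

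It remains to verify that $\Si$ is finitely satisfiable. A finite fragment of $\Si$ mentions only rows $i\le n$ and finitely many columns, and its clauses of type (i) and (ii) are all satisfied by the depth-$n$ inp-pattern that Proposition~\ref{prop: cota inferior burden en grups, cas finit} produces for $\tira Hn$: the rows of that pattern are pairwise inequivalent modulo $H_i$, and the consistency of each initial-segment system follows from Lemma~\ref{lem: TXR en grups}, where the needed compatibility ${a^i_{f(i)}}^{-1}a^j_{f(j)}\in H_iH_j$ is provided by \textbf{commutativity} and the Lemma's own hypothesis is the \textbf{distributivity} condition. By compactness $\Si$ is realized in the monster model, giving the infinite array and hence an inp-pattern of depth $\om$, so $G$ is not strong.

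The point I would stress — and the reason the countable statement is genuinely stronger than merely iterating the finite one — is that one cannot build such an array by a single direct choice. Making row $i$ compatible with \emph{all} other rows simultaneously would force each $a^i_j$ to lie in $\bigcap_{k\ne i}H_kH_i$, and this infinite intersection may have finite index over $H_i$; the \textbf{infinity} hypothesis only controls the finite intersections $K^n_i$, so it would be of no use against the full intersection. Compactness is exactly the device that sidesteps this obstacle: it lets the parameters realizing distinct finite fragments be chosen independently, so that no uniform, globally compatible choice is ever required.
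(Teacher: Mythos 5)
Your proof is correct and follows essentially the same route as the paper: the paper also takes the sequence of formulas $(x\equiv y \bmod H_i \mid i\in\om)$, invokes compactness to reduce the existence of a depth-$\om$ inp-pattern (with $2$-inconsistent rows) to the existence of depth-$n$ patterns for each finite $n$, and obtains those from Proposition~\ref{prop: cota inferior burden en grups, cas finit} via Lemma~\ref{lem: TXR en grups}. The only difference is presentational: you write out the compactness type $\Si$ explicitly, whereas the paper leaves that step as ``a compactness argument shows''.
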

\begin{proof}
 Consider the following sequence of formulas for a pattern of depth $\aleph_0$:
$$
(x\equiv y \mod H_i, i\in\om).
$$
A compactness argument shows that, in order to find an array of $y$-parameters making this sequences an inp-pattern (with all rows 2-inconsistent), it is enough to find parameters making the first $n+1$ formulas an inp-pattern  for each $n$ (with all rows 2-inconsistent). Now, the consistency corresponding to the first $n+1$  rows is obtained by the same arguments as in Proposition \ref{prop: cota inferior burden en grups, cas finit}.

\end{proof}

It is interesting to compare this result with Proposition 3.12 in \cite{Kaplan-Shelah:Chain2013}.

Given a prime number  $p$ and $\De\in\rjp(G)$, there is at most one row with  formulas of kind \eqref{eq: atoms proof strong 03}  or its negation with this $\De$ and $p$. The reason is that these formulas (with the same $\De$ and $p$ and maybe different $m$), by Claim \ref{cl: cosets Delta + p^kG} constitute again a directed family.

Moreover, if $\dim_p(G)$ is finite, the formulas of kind \eqref{eq: atoms proof strong 03} for $\De\in\rjp(G)$ cannot occur (nor their negations) in the pattern because, by Claim \ref{cl: p-dim finita implica la formula .... es  NA}, they are NA-formulas. Hence, as $\set{p\in\PP \mid \dim_p(G)\ge\aleph_0}$ is finite and $\rjp(G)$ are finite for each prime $p$ we have only a finite number of possible rows.
\end{proof}

In fact, the proof of Theorem \ref{thm: cota superior Dp-rang, cas strong} provides us the following upper bound for
$\dpr(G)$:

$$
\dpr(G)\le1+\sum_{p\in\PP \atop \dim_p(G)\ge\aleph_0}\abs{\rjp(G)}
$$
Observe that this formula implies that any \oag\ with finite dimension is dp-minimal.
In the next section we will improve this formula in order to get the exact value of $\dpr(G)$.

The main Theorem of this section is the characterization of strong \oag s:
\begin{thm}\label{thm: caracteritzacio strong}
An \oag\ is strong iff it has bounded regular rank and almost finite dimension.
\end{thm}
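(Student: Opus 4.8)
The plan is to observe that this final statement is essentially a repackaging of the three results already established in Sections~\ref{sec: Strong oag s} (and the reduction lemmas of Section~\ref{sec: computing dpra}), so the proof is pure bookkeeping: there is no genuinely new obstacle here, since all the work has been done in the preceding propositions and theorems. I would prove the two implications separately.

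For the forward direction, assume $G$ is strong. I would invoke Proposition~\ref{prop: infinite p-dim=inf implica no strong} to conclude that $G$ has almost finite dimension, and Theorem~\ref{thm: p-regular rank infinit implica no strong} to conclude that $G$ has bounded regular rank. This is immediate once those two results are in hand. The content of those results — which is where the real difficulty lies — is that failure of either condition produces, via the chain-condition machinery of Proposition~\ref{prop: cota inferior burden en grups, cas numerable}, a sequence of definable subgroups satisfying commutativity, distributivity and infinity, hence an inp-pattern of infinite depth witnessing non-strongness. In the case of infinite dimension one takes $H_i = p_iG$ for the infinitely many primes $p$ with $\dimp(G)$ infinite; in the case of unbounded regular rank one takes the more delicate family $H_i = \De_i + p^iG$ built from an infinite $p$-regular jump chain.

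For the backward direction, assume $G$ has bounded regular rank and almost finite dimension. By Theorem~\ref{thm: cota superior Dp-rang, cas strong}, $G$ has finite \dpra. Since any theory of finite \dpra\ is strong (as noted immediately after the definition of strong in Section~\ref{sec: Chain conditions}, because an inp-pattern of infinite depth would in particular give an ict-pattern of infinite depth, contradicting finiteness of the \dpra), it follows that $G$ is strong. Here the substance is buried in Theorem~\ref{thm: cota superior Dp-rang, cas strong}, whose proof uses the quantifier elimination of Theorem~\ref{thm: QE} together with the pattern-reduction Propositions~\ref{prop: wict and special patterns compute dp-rank}, \ref{prop: no disjunctions}, \ref{prop: no conjunctions} and \ref{prop: single-row} to show that, after reducing to literals, only finitely many rows can occur in a wict-pattern.

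The only point worth emphasizing is that ``strong'' and ``finite \dpra'' are \emph{not} formally the same notion, so the backward direction really does need the implication ``finite \dpra\ $\Rightarrow$ strong'' as an intermediate step rather than a direct equivalence; but since all \oag s are NIP, strong coincides with strongly dependent here, and the one-way implication from finite \dpra\ is all that is required. Thus the main obstacle for this theorem is not in its own proof but entirely in the prior results it cites, and I would present it simply as the assembly of Proposition~\ref{prop: infinite p-dim=inf implica no strong}, Theorem~\ref{thm: p-regular rank infinit implica no strong} and Theorem~\ref{thm: cota superior Dp-rang, cas strong}.
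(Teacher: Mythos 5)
Your proposal is correct and takes essentially the same route as the paper: the paper's own proof is exactly the assembly of Proposition~\ref{prop: infinite p-dim=inf implica no strong}, Theorem~\ref{thm: p-regular rank infinit implica no strong} and Theorem~\ref{thm: cota superior Dp-rang, cas strong}, with the implication ``finite \dpra\ $\Rightarrow$ strong'' supplying the backward direction just as you describe. Your explicit remark that strong and finite \dpra\ are not formally the same notion, so the intermediate step is genuinely needed, is a point the paper leaves implicit (it is noted only in passing after the definition of strong), but it does not change the argument.
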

\begin{proof}
  By Proposition \ref{prop: infinite p-dim=inf implica no strong}, Theorem \ref{thm: p-regular rank infinit implica no strong}  and Theorem \ref{thm: cota superior Dp-rang, cas strong}.
\end{proof}

\begin{cor}\label{cor: strong OAG => finite dp-rank}
Any strong \oag\ has finite \dpra.
\end{cor}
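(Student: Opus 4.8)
The final statement to prove is Corollary~\ref{cor: strong OAG => finite dp-rank}, which asserts that any strong \oag\ has finite \dpra.

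The plan is to deduce this immediately from the two preceding results, namely the characterization of strongness in Theorem~\ref{thm: caracteritzacio strong} and the upper bound on \dpra\ in Theorem~\ref{thm: cota superior Dp-rang, cas strong}. First I would invoke Theorem~\ref{thm: caracteritzacio strong}: since $G$ is assumed to be a strong \oag, the characterization tells us that $G$ has bounded regular rank and almost finite dimension. Having extracted these two algebraic properties, I would then feed them directly into Theorem~\ref{thm: cota superior Dp-rang, cas strong}, whose hypothesis is precisely that the \oag\ has bounded regular rank and almost finite dimension, and whose conclusion is that $G$ has finite \dpra. Chaining these two implications yields the corollary.

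There is essentially no obstacle here, since the corollary is a formal consequence of results already established; the genuine mathematical content lives in Theorems~\ref{thm: p-regular rank infinit implica no strong}, \ref{thm: cota superior Dp-rang, cas strong}, and the characterization \ref{thm: caracteritzacio strong}. For completeness one might note the explicit bound already exhibited after the proof of Theorem~\ref{thm: cota superior Dp-rang, cas strong}, namely
$$
\dpr(G)\le 1+\sum_{p\in\PP \atop \dim_p(G)\ge\aleph_0}\abs{\rjp(G)},
$$
which is finite exactly when only finitely many primes $p$ have $\dim_p(G)\ge\aleph_0$ (almost finite dimension) and each $\rjp(G)$ is finite (bounded regular rank). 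Thus the corollary can equally be read off directly from this formula, providing a quantitative version of the qualitative statement.

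Therefore the proof is simply the observation that a strong \oag\ satisfies the hypotheses of Theorem~\ref{thm: cota superior Dp-rang, cas strong} by Theorem~\ref{thm: caracteritzacio strong}, giving finite \dpra. I would keep the write-up to a single sentence citing these results.
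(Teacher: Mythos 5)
Your proposal is correct and matches the paper's argument in substance: the paper proves the corollary by citing Proposition~\ref{prop: infinite p-dim=inf implica no strong} and Theorem~\ref{thm: p-regular rank infinit implica no strong} (which together are exactly the forward direction of Theorem~\ref{thm: caracteritzacio strong}) followed by Theorem~\ref{thm: cota superior Dp-rang, cas strong}, so routing through the characterization theorem as you do is the same proof with a slightly different citation path. Your added remark about the explicit bound $\dpr(G)\le 1+\sum_{p\in\PP,\ \dim_p(G)\ge\aleph_0}\abs{\rjp(G)}$ is also consistent with the paper.
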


\begin{proof}
  By Proposition \ref{prop: infinite p-dim=inf implica no strong}, Theorem \ref{thm: p-regular rank infinit implica no strong}  and Theorem \ref{thm: cota superior Dp-rang, cas strong}.
\end{proof}

\end{proof}

\begin{cor}\label{cor: la-VCA le n sii dprank le n}
Let $G$ be an \oag\ with bounded regular rank. Let $\la$ be cardinal number (finite or infinite). Then \tfae
\begin{enumerate}
  \item $G$ has dp-rank at most $\la$
  \item $G$ is $\la$-VCA
\end{enumerate}
\end{cor}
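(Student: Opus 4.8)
The plan is to derive both implications from Theorem~\ref{thm: formula Dp-rang, cas RR acotat} and, crucially, from what its proof actually establishes, together with Proposition~\ref{prop: la-VCA implica dprang le la}. The direction \emph{2}$\Rightarrow$\emph{1} is immediate and requires no hypothesis on $G$: Proposition~\ref{prop: la-VCA implica dprang le la} says precisely that a $\la$-VCA theory has $\dpr\le\la$. So the whole content lies in \emph{1}$\Rightarrow$\emph{2}.

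For \emph{1}$\Rightarrow$\emph{2} I would first dispose of the trivial group $G=\set{0}$: there every definable $1$-set is finite, hence defined by an NA-formula, so $G$ is $0$-VCA and therefore $\la$-VCA for every $\la$, while also $\dpr(G)=0\le\la$; the equivalence holds vacuously. So assume $G$ nontrivial. The key observation is that the upper-bound half of the proof of Theorem~\ref{thm: formula Dp-rang, cas RR acotat} does not merely bound the dp-rank: it exhibits $G$ as $N$-VCA for $N:=1+\sum_{p\in\PP}\abs{\rjpi(G)}$. Indeed that argument produces $1+\sum_p N_p$ directed families — one for the definable initial segments, and one for each prime $p$ together with each relevant $\De\in\rjpi(G)$ — such that every definable $1$-set is a boolean combination of their instances and NA-formulas (and when $N$ is infinite the theory is plainly $\aleph_0$-VCA). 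Since the same Theorem gives $\dpr(G)=N$, the hypothesis $\dpr(G)\le\la$ translates into exactly $N\le\la$.

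It then remains to record the monotonicity of the VCA hierarchy: being $N$-VCA with $N\le\la$ implies being $\la$-VCA. Given the witnessing collection $\gen{\Psi_i\mid i<N}$ of $N$ directed families, extend it to a $\la$-indexed collection by setting $\Psi_i:=\buit$ for $N\le i<\la$; the empty set is vacuously a directed family, and since $\bigcup_{i<\la}\Psi_i=\bigcup_{i<N}\Psi_i$ the defining condition of $\la$-VCA is preserved. Hence $G$ is $\la$-VCA, completing the sketch. There is no genuinely hard step here; the one point demanding care is that the conclusion rests on the \emph{sharp} value $\dpr(G)=N$ and on the explicit VCA-witness extracted from the \emph{proof} of Theorem~\ref{thm: formula Dp-rang, cas RR acotat}, not on its statement alone — in particular one must make sure the cardinal $N$ there literally equals $\dpr(G)$ in every case, including $N=\aleph_0$, which is exactly what that theorem asserts.
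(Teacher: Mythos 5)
Your proposal is correct and follows essentially the same route as the paper: direction \emph{2}$\Rightarrow$\emph{1} is Proposition~\ref{prop: la-VCA implica dprang le la}, and direction \emph{1}$\Rightarrow$\emph{2} extracts from the proof (not just the statement) of Theorem~\ref{thm: formula Dp-rang, cas RR acotat} that $G$ is $N$-VCA with $N=1+\sum_{p\in\PP}\abs{\rjpi(G)}=\dpr(G)$. The only additions are bookkeeping the paper leaves implicit — the trivial group, the padding-by-empty-families monotonicity step, and the $N=\aleph_0$ case — all of which are handled correctly.
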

\begin{proof}
\emph{2} implies \emph{1} is just Proposition \ref{prop: la-VCA implica dprang le la}. For the converse, observe that for the proof of the inequality $\dpr(G)\le 1+\sum_{p\in\PP}\abs{\rjpi}$ in Theorem \ref{thm: formula Dp-rang, cas RR acotat} one shows that $G$ is  $N$-VCA, where $N=1+\sum_{p\in\PP}\abs{\rjpi(G)}=\dpr(G)$.
\end{proof}

We say that an \oag\ has \textbf{finite dimension} iff $\dimp(G)$ is finite for all prime $p$. Observe that by Remark \ref{rem: p-dim finita <=> p-rr finit + RJpinf buit} and Theorem \ref{thm: formula Dp-rang, cas RR acotat}, an \oag\ is dp-minimal iff has finite dimension. This characterization of dp-minimality has been independently obtained in \cite{jahnke_simon_walsberg_2017}.

 By Corollary \ref{cor: la-VCA le n sii dprank le n},  an \oag\ is dp-minimal iff it is VCA-minimal.

\begin{cor}\label{cor: formula Dp-rank d'una suma directa de OAGs}
If $G$ and $H$ are non-trivial, then
$\dpr(G\times H)=\dpr(H)+\dpr(H)-1$
\end{cor}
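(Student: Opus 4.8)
The plan is to reduce everything to the counting formula of Theorem~\ref{thm: formula Dp-rang, cas RR acotat} and to prove that the quantity $\sum_{p}\abs{\rjpi}$ is additive on a lexicographic product. I read the intended statement as $\dpr(G\times H)=\dpr(G)+\dpr(H)-1$. First reduce to the case where both $G$ and $H$ have bounded regular rank: if one of them does not, it is not strong (Theorem~\ref{thm: p-regular rank infinit implica no strong}), hence has infinite \dpra; since $G$ and $H$ are interpretable in $G\times H$ (as the quotient and a convex subgroup) and \dpra\ does not increase under interpretation, both sides are $\infty$ and there is nothing to prove. So assume bounded regular rank throughout, order $G\times H$ lexicographically with $G$ dominant, and apply Theorem~\ref{thm: formula Dp-rang, cas RR acotat} to $G$, $H$ and $G\times H$. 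Since $\dpr(G)=1+\sum_p\abs{\rjpi(G)}$ and likewise for $H$ and $G\times H$, and since $1+(\dpr(G)-1)+(\dpr(H)-1)=\dpr(G)+\dpr(H)-1$, it suffices to establish, for each prime $p$, the per-prime identity
\[
\abs{\rjpi(G\times H)}=\abs{\rjpi(G)}+\abs{\rjpi(H)}. \tag{$\star$}
\]
(Summation over $p$ causes no trouble even when infinite, as the only delicate case, unbounded rank, is already handled.)

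To prove $(\star)$ I would first describe the convex-subgroup lattice: with $G$ dominant, $\{0\}\times H\unlhd G\times H$ has quotient $G$, and every convex subgroup is either $\{0\}\times\La$ with $\La\unlhd H$ or $\De\times H$ with $\De\unlhd G$. Writing the $p$-regular chains $\{0\}=\La_0\lhd\cdots\lhd\La_k=H$ and $\{0\}=\De_0\lhd\cdots\lhd\De_l=G$ (so that $\rjp(H)=\set{\La_0,\dots,\La_{k-1}}$ and $\rjp(G)=\set{\De_0,\dots,\De_{l-1}}$), their concatenation
\[
\{0\}\times\La_0\lhd\cdots\lhd\{0\}\times\La_k=\De_0\times H\lhd\De_1\times H\lhd\cdots\lhd\De_l\times H
\]
is a chain all of whose quotients are the $p$-regular groups $\La_{i+1}/\La_i$ and $\De_{j+1}/\De_j$. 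I would then argue this is the finest valid $p$-regular decomposition of $G\times H$ \emph{except possibly at the single new junction} $\{0\}\times H$: by Remark~\ref{rem: rang n-regular finit}.\emph{3} every quotient but the bottom one must be non-$p$-divisible, and every junction internal to $G$ or to $H$ already satisfies this. The only quotient that may violate it is the one just above $\{0\}\times H$, namely $\De_1/\De_0=\De_1$, the bottom piece of $G$, which was allowed to be $p$-divisible in $G$'s own decomposition but now sits in interior position.

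The heart of the matter is this junction. I would verify the regularity criterion that a lexicographic sum $A\oplus B$ with $A$ dominant and $A,B$ both $p$-regular is $p$-regular iff $A$ is $p$-divisible (one checks $p$-divisibility of $L/\Sigma$ for every nonzero convex $\Sigma$, the critical case being $\Sigma=\{0\}\oplus B$ with quotient $A$). Applying this to the would-be merged piece $(\De_1\times H)/(\{0\}\times\La_{k-1})\cong\De_1\oplus(H/\La_{k-1})$, I conclude: if $\De_1$ is not $p$-divisible the concatenated chain is already the finest decomposition, whereas if $\De_1$ is $p$-divisible the top piece $H/\La_{k-1}$ of $H$ must be merged with $\De_1$ into a single piece $L$. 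In the non-merge case the pieces of $G\times H$ are the disjoint union of those of $G$ and of $H$, so the infinite-dimensional ones add and $(\star)$ is immediate.

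For the merge case comes the key point, which is also the main obstacle to phrase cleanly: a merge can only ever absorb a $p$-divisible, hence $p$-dimension-zero, piece. Indeed $\De_1$ is $p$-divisible, so $\dim_p(\De_1)=0$, and by additivity of $p$-dimension along convex subgroups (valid because $pG\cap\De=p\De$ for convex $\De$, exactly as in the proof of Claim~\ref{cl: p-dim finita implica la formula .... es NA}) we get $\dim_p(L)=\dim_p(\De_1)+\dim_p(H/\La_{k-1})=\dim_p(H/\La_{k-1})$. Hence $\De_1$ never contributed to $\rjpi(G)$, and $L$ is infinite-dimensional precisely when the top piece of $H$ was; the merge therefore alters neither $\abs{\rjpi(G)}$ nor $\abs{\rjpi(H)}$ in the count, so $(\star)$ holds in this case too. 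By symmetry of $(\star)$ in $G$ and $H$ the choice of dominant coordinate is immaterial. Summing $(\star)$ over all primes and substituting into Theorem~\ref{thm: formula Dp-rang, cas RR acotat} yields $\dpr(G\times H)=\dpr(G)+\dpr(H)-1$.
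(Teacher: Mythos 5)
Your proposal is correct and follows essentially the same route as the paper: the paper's entire proof consists of asserting the per-prime identity $\abs{\rjpi(G\times H)}=\abs{\rjpi(G)}+\abs{\rjpi(H)}$ as ``easy to check'' and then summing over $p$ and invoking Theorem~\ref{thm: formula Dp-rang, cas RR acotat}, exactly your plan; what you add is the actual verification (the convex-subgroup lattice of the lexicographic product, the concatenated $p$-regular chains, and the analysis of the possible merge at the junction $\{0\}\times H$), which is a genuine service since the merge case is the only place where the identity could conceivably fail, and your observation that a merged bottom piece is $p$-divisible and hence of $p$-dimension zero is precisely the point that makes the count come out right.

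One caveat on your preliminary reduction: the claim that $G$ and $H$ are interpretable in $G\times H$ is not justified, because the convex subgroup $\{0\}\times H$ need not be definable (e.g.\ in $\Q\times\Q$ ordered lexicographically, which is o-minimal, no proper nontrivial convex subgroup is definable), so neither the subgroup nor the quotient is an interpretation in general. The conclusion you want survives without it: every $\De\in\rjp(G)$ lifts to $\De\times H\in\rjp(G\times H)$ and every $\La\in\rjp(H)$ to $\{0\}\times\La\in\rjp(G\times H)$, so if either factor has unbounded regular rank then so does $G\times H$, and then Theorem~\ref{thm: p-regular rank infinit implica no strong} makes both sides of the formula infinite. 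With that repair your argument is complete.
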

\begin{proof}
It is easy to check that  $\abs{\rjpi(G\times H)}=\abs{\rjpi(G)}+\abs{\rjpi(H)}$.

Hence $\sum_{p\in\PP}\abs{\rjpi(G\times H)}=\sum_{p\in\PP}\abs{\rjpi(G)}+\sum_{p\in\PP}\abs{\rjpi(H)}$.
\end{proof}

\section{VC-density of \oag s}\label{sec: VC-density}

In this section we show that for any \oag\  $G$,  $vc^G(1)$, the VC-density function of $G$ evaluated at $1$, coincides with the \dpra\ of $G$.

Following \cite{ADHMS:2}, de VC-density function of a complete theory $T$ is the function $vc^T: \N\to \R^{\ge0}\cup{\infty}$ defined by
$$
vc^T(n)=\sup\set{vc^*(\vf(\ov x,\ov y)) \mid \vf(\ov x,\ov y)\text{ is an formula and }\abs{\ov x}=n}
$$
Here $vc^*(\vf(\ov x,\ov y))$ denotes the dual VC-density of the partitioned formula $\vf(\ov x,\ov y)$, as defined in \ref{def: dual VC-density}.
For more details on the VC-density function, see \cite{ADHMS:2}.

In the proof of next proposition we use Facts~\ref{fets: sobre el cardinal de S^varphi(A) i els atoms d'una algebra de boole finita}.


\begin{prop}\label{prop: n-VCA implica 1-vc-density=n}
In a $n$-VCA theory $T$, any formula $\vf(x,\ov y)$ has dual VC-density at most $n$. In other words, $vc^T(1)\le n$.
\end{prop}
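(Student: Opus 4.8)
The plan is to run the proof of Proposition~\ref{prop: VCA minimal implica 1-vc-density=1} (which is the case $n=1$), but with the single directed family replaced by the $n$ families granted by $n$-VCA, and to combine their contributions multiplicatively. I may assume $n$ is finite, the infinite case being vacuous. Fix $\vf(x,\ov z)$. Let $\Si$ be the collection of all Boolean combinations of formulas in $\bigcup_{i<n}\Psi_i(x,\ov y)$ together with NA-formulas $\psi(x,\ov y)$; by the definition of $n$-VCA every definable $1$-set is an instance of a member of $\Si$, so Fact~\ref{fet: finite boolean options} yields a finite $\Theta\se\Si$ such that each instance of $\vf(x,\ov z)$ is equivalent to an instance of some $\mu\in\Theta$. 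For each $i<n$ let $\Psi_{0,i}$ be the finite set of formulas of $\Psi_i$ appearing in the members of $\Theta$, and let $\Upsilon$ be the finite set of NA-formulas appearing in them; as in the $n=1$ case I arrange that all these formulas use a common parameter tuple $\ov u$.

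Given a finite set $A$ of $\ov z$-parameters, I would choose for each $\ov a\in A$ one tuple $\ov b$ and one $\mu\in\Theta$ with $\vf(\C,\ov a)=\mu(\C,\ov b)$, obtaining a set $B$ of $\ov u$-parameters with $\abs{B}\le\abs{A}$ and expressing every $\vf(\C,\ov a)$ as a Boolean combination of sets $\psi(\C,\ov b)$ with $\psi\in\bigcup_{i<n}\Psi_{0,i}\cup\Upsilon$ and $\ov b\in B$. By Facts~\ref{fets: sobre el cardinal de S^varphi(A) i els atoms d'una algebra de boole finita}.1, $\abs{S^\vf(A)}$ equals the number of atoms of the Boolean algebra generated by the sets $\vf(\C,\ov a)$, and that algebra sits inside the one generated by all the $\psi(\C,\ov b)$ just listed.

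The heart of the argument is to bound the atoms of this larger algebra family by family and then multiply. For each $i<n$, the instances $\psi(\C,\ov b)$ with $\psi\in\Psi_{0,i}$ and $\ov b\in B$ form a directed family of at most $\abs{\Psi_{0,i}}\abs{A}$ sets, since $\Psi_i$ is an instantiable directed family; hence by Facts~\ref{fets: sobre el cardinal de S^varphi(A) i els atoms d'una algebra de boole finita}.3 the subalgebra they generate has at most $\abs{\Psi_{0,i}}\abs{A}+1$ atoms. The instances of the NA-formulas $\Upsilon$ define only boundedly many sets (each NA-formula defines at most $N$ sets in total), so the subalgebra they generate has at most a constant number of atoms $c$, independent of $\abs{A}$, exactly as in Proposition~\ref{prop: VCA minimal implica 1-vc-density=1}. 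Applying Facts~\ref{fets: sobre el cardinal de S^varphi(A) i els atoms d'una algebra de boole finita}.2 (submultiplicativity of the number of atoms under joining subalgebras) across the $n$ directed families and the NA-part gives
$$
\abs{S^\vf(A)}\le c\prod_{i<n}\bigl(\abs{\Psi_{0,i}}\abs{A}+1\bigr).
$$

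Finally I would observe that the right-hand side is a polynomial in $\abs{A}$ of degree $n$, so $\abs{S^\vf(A)}=O(\abs{A}^n)$ and therefore $vc^*(\vf)\le n$, giving $vc^T(1)\le n$. The only point requiring care, precisely as in the $n=1$ case, is the bookkeeping that the NA-formulas contribute a constant factor rather than a power of $\abs{A}$; the conceptual content is simply that each of the $n$ directed families raises the degree of the atom bound by exactly one, while the NA-part is free.
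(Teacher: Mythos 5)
Your proof is correct and follows essentially the same route as the paper's: apply Fact~\ref{fet: finite boolean options} to reduce to a finite set $\Theta$ of Boolean combinations, split the constituent formulas into the $n$ directed families plus the NA-formulas, and combine the atom counts via Facts~\ref{fets: sobre el cardinal de S^varphi(A) i els atoms d'una algebra de boole finita}, with each directed family contributing a linear factor $\abs{\Psi'_i}\abs{A}+1$ and the NA-part a constant (the paper makes your constant $c$ explicit as $2^{N^{\abs{\Upsilon}}}$). No gaps; this is the paper's argument.
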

\begin{proof}
Let $(\Psi_i(x,\ov y) \mid i< n)$ be a collection of $n$ directed families witnessing the theory is $n$-VCA. By fact \ref{fet: finite boolean options}, for each formula $\vf(x,\ov z)$ there is a finite set $\Theta$ of formulas which are Boolean combinations of formulas in $\bigcup_{i<n}\Psi_i(x,\ov y) $ and NA-formulas, such that each instance of $\vf(x,\ov z)$ is equivalent to an instance of some formula in $\Theta$. Let $\Psi'_i$ denote respectively the set of formulas from $\Psi_i$ occurring in the boolean combinations of formulas in $\Theta$ and let $\Upsilon$ be the set of NA-formulas occurring in the boolean combinations of formulas in $\Theta$. Let $N$ be a common upper bound of the number of different sets each formula in $\Upsilon$ can define. We may assume all formulas in $\Upsilon$,$\Theta$ and $\Psi'_i$ have the same parameter variables, say $\ov u$.

Given a set of $\ov z$-parameters $A$, we can choose a set of  $\ov u$-parameters $B$ of size at most $\abs{A}$ such that each instance of $\vf(x,\ov z)$ with parameters from $A$ is an instance of some formula in $\Theta$ with parameters from $B$. Hence, any definable set $\vf(\C,\ov a)$ with $\ov a\in A$ is a boolean combination of sets of kind $\psi(\C,\ov b)$ where $\psi(x,\ov u)\in \bigcup_{i<n}\Psi'_i\cup\Upsilon$ and $\ov b\in B$.

Now it is not difficult to see that $\abs{S^{\vf}(A)}\le
2^{N^{\abs{\Upsilon}}} \prod_{i<n}(\abs{\Psi'_i}\abs{A}+1)$. This holds because  the Boolean algebra generated by $\Upsilon$-formulas with parameters in $B$  has at most $2^{N^{\abs{\Upsilon}}}  $ atoms becase there are at most $N^{\abs{\Upsilon}}$ such nonequivalent formulas. And the boolean algebra generated by the sets defined by $\Psi'_i$-formulas with parameters from $B$ has at most $\abs{\Psi'_i}\abs{A}+1$ atoms.
\end{proof}

\begin{prop}\label{prop: dprank T le vcdensitat T}For any complete theory $T$,
  $$\dpr(T)\le\vc^T(1)$$
\end{prop}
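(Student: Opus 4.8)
The plan is to show that any ict-pattern of finite depth $n$ for the single-variable type $x=x$ yields a single partitioned formula $\Phi(x,\ov y)$ (in the single variable $x$) with $vc^*(\Phi)\ge n$. Since $\dpr(T)=\dpr(x=x)$ and $\vc^T(1)=\sup_{\abs{\ov x}=1}vc^*(\vf(\ov x,\ov y))$, this gives $\vc^T(1)\ge n$; letting $n$ range over all finite values below $\dpr(T)$ (and using that infinite dp-rank produces finite sub-patterns of every depth) yields the claim. First I would fix a finite $n\le\dpr(T)$ and an ict-pattern of depth $n$ for $x=x$, with formulas $\vf_0,\dots,\vf_{n-1}$ and array $(\ov a^i_j\mid i<n,\ j\in O)$. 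Restricting the columns to $j\in\{1,\dots,m\}$, the defining property of the pattern guarantees that for every $f\colon n\to\{1,\dots,m\}$ the set $\set{\vf_i(x,\ov a^i_{f(i)})\mid i<n}\cup\set{\lnot\vf_i(x,\ov a^i_j)\mid i<n,\ 1\le j\le m,\ j\ne f(i)}$ is consistent, since it is a subset of a full path of the pattern.

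The key step is to merge the $n$ rows into one formula. Assuming the home sort is infinite (otherwise $\dpr(T)=0$ and there is nothing to prove), I pick pairwise distinct elements $e_0,\dots,e_{n-1}$ and set
$$\Phi(x;w,v_0,\dots,v_{n-1},\ov u_0,\dots,\ov u_{n-1}):=\bigvee_{i<n}\big(w=v_i\wedge\vf_i(x,\ov u_i)\big),$$
a single parameter-free partitioned formula, where $\abs{\ov u_i}=\abs{\ov y_i}$. For the tuple $\ov b^i_j$ obtained by setting $w=e_i$, $v_k=e_k$ for all $k$, $\ov u_i=\ov a^i_j$, and padding the remaining coordinates arbitrarily, distinctness of the $e_k$ forces only the $i$-th disjunct to survive, so $\Phi(\C,\ov b^i_j)=\vf_i(\C,\ov a^i_j)$.

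Taking $A:=\set{\ov b^i_j\mid i<n,\ 1\le j\le m}$, the Boolean algebra generated by $\set{\Phi(\C,\ov b)\mid \ov b\in A}$ coincides with the one generated by $\set{\vf_i(\C,\ov a^i_j)}$. By the consistency noted above, for each of the $m^n$ functions $f\colon n\to\{1,\dots,m\}$ there is a point lying in $\vf_i(\C,\ov a^i_{f(i)})$ for all $i$ and outside $\vf_i(\C,\ov a^i_j)$ for $j\ne f(i)$; distinct $f$ land in distinct atoms. Hence by Facts~\ref{fets: sobre el cardinal de S^varphi(A) i els atoms d'una algebra de boole finita}.\emph{1}, $\abs{S^\Phi(A)}\ge m^n$, while $\abs{A}=nm$, so $\abs{S^\Phi(A)}\ge(\abs{A}/n)^n=\abs{A}^{\,n}/n^n$. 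Since $\abs{A}=nm\to\infty$ as $m\to\infty$ and $\abs{A}^{\,n}/n^n$ is not $O(\abs{A}^{\,s})$ for any $s<n$, the definition of dual VC-density gives $vc^*(\Phi)\ge n$, whence $\vc^T(1)\ge n$.

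If $\dpr(T)$ is finite, taking $n=\dpr(T)$ finishes the proof; if $\dpr(T)$ is infinite, every finite $n$ arises (restrict an infinite-depth pattern to $n$ rows), so $\vc^T(1)\ge n$ for all $n$ and thus $\vc^T(1)=\infty\ge\dpr(T)$. The only genuine obstacle is the merging step: combining rows carrying distinct formulas into one formula without inflating the parameter count by more than a constant factor. The distinct-tag encoding handles exactly this, keeping $\abs{A}=nm$ so that the $m^n$ atoms translate into dual VC-density at least $n$; the rest is the routine atom count and the conversion between the asymptotic growth $m^n$ and the exponent in the definition of $vc^*$.
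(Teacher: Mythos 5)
Your proof is correct and follows essentially the same route as the paper's: both reduce to a finite-depth ict-pattern for $x=x$, merge the $n$ rows into a single disjunctive formula, and count $m^n$ path-types over roughly $nm$ parameters to conclude $\vc^*(\Phi)\ge n$, hence $\vc^T(1)\ge\dpr(T)$. The only difference is the device for switching off the inactive disjuncts: the paper fills their parameter blocks with entries from an unused column $M+1$ of the pattern (so path realizations inside the window never satisfy them), whereas you use equality tags $w=v_i$ evaluated at distinct constants $e_i$ — a cosmetic variation that additionally requires the harmless observation that the main sort may be assumed infinite.
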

\begin{proof}
  We show that $\dpr(G)\ge n$ implies $\vc^G(1)\ge n$, where $n$ is a natural number. Let $\gen{\psi_i(x,\ov y)\mid i=1\ldots n}$ and $\gen{ a_{i,j} \mid i=1\dots n , \ j\in\om }$ be an ict-pattern for $x=x$. Fix $M\in\om$ and consider
  $$
 B:=\set{ \ov b_{i,j} \mid  i=1\dots n,\ j=1\dots M }
 $$\vspace{-20pt}
 $$
  \psi(x,\ov y):=\psi_1(x,\ov y) \lor \cdots \lor \psi_n(x,\ov y),
  $$
  where
  $$
   \ov b_{i,j}:= a_{1,M+1}, \ldots  a_{i-1,M+1},    a_{i,j},   a_{i+1,M+1} \ldots  a_{n,M+1}.
  $$
  Observe that
  $$
   \psi(x,\ov b_{i,j})\land \lnot\psi(x,\ov b) \vdash  \psi_i(x,\ov a_{i,j})
   $$
   \vspace{-15pt}
   $$
   \lnot \psi(x,\ov b_{i,j}) \vdash   \lnot \psi_i(x,\ov a_{i,j}),
   $$
   where
  $$
   \ov b:= a_{1,M+1}, \ldots  a_{i,M+1},  \ldots  a_{n,M+1}.
  $$
   Since there are $M^n$ different paths within the first $M$ columns of the ict-patttern, this implies that $\abs{S^{\psi}(B)} \ge M^n$. Keeping in mind that $\abs{B}\le nM$, this entails that $\abs{S^{\psi}(B)} \ge (1/n)^n\abs{B}^n$. As $M$ can be arbitrarily large this implies that $\vc^*(\psi)\ge n$.
\end{proof}
\begin{thm}\label{thm: Dp-rank = Vc-densitat}
Let $G$ be an \oag\ with bounded regular rank. Then
$$
\vc^G(1)=\dpr(G).
$$
\end{thm}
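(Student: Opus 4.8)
The plan is to establish the two inequalities $\dpr(G)\le\vc^G(1)$ and $\vc^G(1)\le\dpr(G)$ separately, using the machinery already assembled. The inequality $\dpr(G)\le\vc^G(1)$ needs no hypothesis on $G$ at all: it is precisely Proposition~\ref{prop: dprank T le vcdensitat T}, whose proof converts an ict-pattern of depth $n$ into a single disjunction $\psi=\psi_1\lor\cdots\lor\psi_n$ realizing $M^n$ distinct types on a parameter set of size $\le nM$, thereby forcing $\vc^*(\psi)\ge n$. So all the real content lies in the reverse inequality, and this is the only place where bounded regular rank enters.

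For the reverse inequality I would argue according to whether $\dpr(G)$ is finite or infinite. Suppose first that $\dpr(G)=n<\aleph_0$. Since $G$ has bounded regular rank and dp-rank at most $n$, Corollary~\ref{cor: la-VCA le n sii dprank le n} tells us that the theory of $G$ is $n$-VCA, \ie every definable set in a single variable is a Boolean combination of instances drawn from $n$ directed families together with instances of NA-formulas. Plugging this $n$-VCA structure into Proposition~\ref{prop: n-VCA implica 1-vc-density=n} yields $\vc^G(1)\le n$. Together with the first paragraph this gives $\vc^G(1)=n=\dpr(G)$, the equation being a genuine equality of integers because the two bounds squeeze $\vc^G(1)$ onto the integer $n$.

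If instead $\dpr(G)$ is infinite, then by Theorem~\ref{thm: formula Dp-rang, cas RR acotat} (for nontrivial $G$; the trivial group has both invariants equal to $0$) we have $\dpr(G)=1+\sum_{p\in\PP}\abs{\rjpi(G)}=\aleph_0$. In this case the already-proven inequality $\dpr(G)\le\vc^G(1)$ forces $\vc^G(1)=\infty$, so the desired equality holds under the standard identification of an infinite dp-rank with $\infty$; there is nothing further to check, and in particular Proposition~\ref{prop: n-VCA implica 1-vc-density=n} (whose counting argument is genuinely finitary) is not needed here.

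I do not anticipate a serious obstacle, since the deep work is done upstream in Theorem~\ref{thm: formula Dp-rang, cas RR acotat} and Corollary~\ref{cor: la-VCA le n sii dprank le n}. The one point requiring care is the compatibility of the $n$-VCA decomposition with the VC-density count: one must check that the families handed over by Corollary~\ref{cor: la-VCA le n sii dprank le n}---the initial-segment family and one coset family for each pair $(p,\De)$ with $\De\in\rjpi(G)$---are exactly the directed families consumed in the estimate $\abs{S^\vf(A)}\le 2^{N^{\abs{\Upsilon}}}\prod_{i<n}(\abs{\Psi'_i}\abs{A}+1)$ of Proposition~\ref{prop: n-VCA implica 1-vc-density=n}, so that the exponent of $\abs{A}$ is exactly $n=\dpr(G)$ and no NA-formula inflates the VC-density.
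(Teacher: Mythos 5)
Your proof is correct and follows essentially the same route as the paper: Proposition~\ref{prop: dprank T le vcdensitat T} for $\dpr(G)\le\vc^G(1)$, then Corollary~\ref{cor: la-VCA le n sii dprank le n} plus Proposition~\ref{prop: n-VCA implica 1-vc-density=n} for the reverse inequality in the finite case. The paper compresses your infinite-case discussion into the single phrase ``we may assume $G$ has finite dp-rank'' (justified exactly as you argue), and your final worry about compatibility is vacuous, since Proposition~\ref{prop: n-VCA implica 1-vc-density=n} applies to any $n$-VCA theory with no further check needed.
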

\begin{proof}
By Proposition \ref{prop: dprank T le vcdensitat T}, it is enough to show that $\vc^G(1)\le\dpr(G)$. We may assume $G$ has finite \dpra, say equal to $n$. By Corollary \ref{cor: la-VCA le n sii dprank le n} $G$ is $n$-VCA. By Proposition \ref{prop: n-VCA implica 1-vc-density=n}  $\vc^G(1)\le n$.
\end{proof}

\section{Gurevich-Schmitt Quantifier Elimination for \oag s}\label{sec: Gurevich-Schmitt QE for OAG}

The definitions and statements 
of this section are taken from
\cite{Schm82a}, \cite{Schm82b} and \cite{Schm84}. All the proofs may be found there.
We start by giving some definitions.

\begin{defn}Let $G$ be an \oag, $g\in G\setminus\{0\}$ and $n\ge2$.
\begin{itemize}
  \item $G$ is called  \textbf{$n$-regular} if for every convex subgroup
$H\ne\{0\}$ of $G$, $G/H$ is $n$-divisible.
  \item $A(g)=$the largest convex subgroup of $G$ not containing $g$.

  \item $B(g)=$the smallest convex subgroup of $G$ containing $g$.

  \item $C(g)=B(g)/A(g)$.

  \item $A_{n}(g)=$the smallest convex subgroup $C$ of $G$ such that $B(g)/C$ is $n$-
regular.

  \item $B_{n}(g)=$the largest convex subgroup $C$ of $G$ such that $C/A(g)$ is $n$-
regular.

  \item $C_{n}(g)=B_{n}(g)/A_{n}(g)$.

  \item For $g=0$ we define $A_{n}(0)=\emptyset$, $B_{n}(0)=\{0\}$.

  \item $F_{n}(g)=$the largest convex subgroup $C$ of $G$ such that
$C\cap(g+nG)=\emptyset$ 
if $g\notin nG$, $F_{n}(g)=\emptyset$ otherwise.

  \item $\Gamma_{1,n}(g)=\{h\in G\mid F_{n}(h)\subset F_{n}(g)\}$.

  \item $\Gamma_{2,n}(g)=\{h\in G\mid F_{n}(h)\se F_{n}(g)\}$.

\end{itemize}
If $g\notin nG$, $\Gamma_{1,n}(g)$ and $\Gamma_{2,n}(g)$ are shown to be
subgroups of $G$ (Facts \ref{fets: A} ii) below) and we can define:
\begin{itemize}
  \item $\Gamma_{n}(g)=\Gamma_{2,n}(g)/\Gamma_{1,n}(g)$.
\end{itemize}

\end{defn}

The sets $A_{n}(g)$, $B_{n}(g)$, $F_{n}(g)$, $\Gamma_{1,n}(g)$ and $\Gamma_{2,n}(g)$
are shown to be definable 
in the language LOG$=\{0,+,-,\le\}$ by a first-order
formula with the only parameter $g$ (see \cite{Schm82a} and \cite{Gu65}).

\begin{fets}\label{fets: A}  If $g,h\ne0$ then:
\begin{enumerate}
\item $A_{n}(g+h)\se A_{n}(g)\cup A_{n}(h)$ and if $A_{n}(g)\subset A_{n}(h)$ then
$A_{n}(g+h)=A_{n}(h)$.
\item $F_{n}(g+h)\se F_{n}(g)\cup F_{n}(h)$ and if $F_{n}(g)\subset F_{n}(h)$ then
$F_{n}(g+h)=F_{n}(h)$.
\item $F_{n}(g+nh)=F_{n}(g)$. $F_{n}(g)=\emptyset$ iff $g\in nG$.
\item $A_{n}(h)\se A_{n}(g)$ iff $B_{n}(h)\se B_{n}(g)$  iff $A_{n}(h)\subset B_{n}(g)$ iff $h\in B_{n}(g)$ iff $g\notin A_{n}(h)$.
\item $A_{n}(h)\subset A_{n}(g)$ iff $B_{n}(h)\subset B_{n}(g)$ iff $B_{n}(h)\se A_{n}(g)$ iff $g\notin B_{n}(h)$ iff $h\in A_{n}(g)$.
\item $A_{n}(g)=\bigcup\{A_p(g)\mid p\text{ a prime divisor of }n\}$.
\item $F_{n}(g)=\bigcap_{h\in G}A_{n}(g+nh)$.
\end{enumerate}
\end{fets}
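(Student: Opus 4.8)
The plan is to reduce all seven identities to three structural inputs about ordered abelian groups and then read them off; throughout, the decisive fact is that the convex subgroups of $G$ form a chain under inclusion (if $H,K$ are convex and $x\in H\setminus K$, convexity forces $K\se H$). First I would record the behaviour of $n$-regularity: (a) archimedean groups, in particular each jump $B(g)/A(g)$, are $n$-regular for every $n$; (b) if $C\se C'$ are convex and $B(g)/C$ is $n$-regular then so is the further quotient $B(g)/C'$, so the defining property of $A_n(g)$ (resp. $B_n(g)$) is upward (resp. downward) closed along the chain, which is what makes these extremal convex subgroups well defined; and (c) a quotient is $n$-divisible iff it is $p$-divisible for every prime $p\mid n$, whence $B(g)/C$ is $n$-regular iff it is $p$-regular for all $p\mid n$. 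Input (c) gives item~6 at once: the least $C$ with $B(g)/C$ $p$-regular is $A_p(g)$, so the least $C$ working for all $p\mid n$ simultaneously is $\bigcup_{p\mid n}A_p(g)$, which is again a convex subgroup because the chain is linear. Inputs (a),(b) give the sandwich $A_n(g)\se A(g)\subset B(g)\se B_n(g)$, which I will use repeatedly.

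Item~3 is immediate from the definition of $F_n$: since $nh\in nG$ the cosets $g+nG$ and $(g+nh)+nG$ coincide, so $F_n(g)$ and $F_n(g+nh)$ are cut out by the same avoidance condition; and for $g\notin nG$ the subgroup $\{0\}$ already avoids $g+nG$, so $F_n(g)\ne\emptyset$, whereas $F_n(g)=\emptyset$ is the definitional value when $g\in nG$. Items~1 and~2 are valuation (ultrametric) inequalities. For item~1 I would use item~6 to reduce to a prime $p$ and observe that $A_p$ is a convex coarsening of the archimedean valuation $g\mapsto A(g)$ — it floors each archimedean value to the bottom of its $p$-regular slice — so it inherits $A(g+h)\se A(g)\cup A(h)$ with equality when the two values are distinct; the chain structure turns $\cup$ into the larger of the two subgroups. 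Item~2 is the analogous inequality for $F_n$ and follows either directly from the coset definition or from items~1 and~7 via the intersection formula proved below.

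For items~4 and~5 I would prove the organising lemma that the intervals $[A_n(g),B_n(g)]$ are exactly the maximal $n$-regular slices of the chain and hence partition the jumps. The key step is that two overlapping $n$-regular slices merge: if $D_1\se D_2\se C_1\se C_2$ with $C_1/D_1$ and $C_2/D_2$ both $n$-regular, then $C_2/D_1$ is $n$-regular, using that $n$-divisibility is extension-closed together with input (b). Maximality then forces distinct slices to be disjoint and linearly ordered, so $h$ lies in the slice of $g$ iff $A_n(h)\se A_n(g)$ iff $B_n(h)\se B_n(g)$; translating ``lies in the slice'' through the sandwich and the definitions of $A(h),B(h)$ yields the remaining reformulations $h\in B_n(g)$ and $g\notin A_n(h)$, and their strict analogues give item~5.

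The main obstacle is item~7, $F_n(g)=\bigcap_{h\in G}A_n(g+nh)$, since it is the one place where the coset description of $F_n$ must be matched with the regularity description of $A_n$. I would prove both inclusions by a single $n$-divisibility argument. For $\se$: fix $h$ and put $x=g+nh\ne0$ (nonzero as $g\notin nG$); since $x\in g+nG$ and $F_n(g)$ avoids $g+nG$ we get $x\notin F_n(g)$, hence $F_n(g)\se A(x)$; if we had $A_n(x)\subsetneq F_n(g)$, then in the $n$-regular group $B(x)/A_n(x)$ the nonzero convex subgroup $F_n(g)/A_n(x)$ makes $B(x)/F_n(g)$ $n$-divisible, so some $x-ny\in F_n(g)$ with $y\in B(x)$, and $x-ny\in g+nG$ contradicts avoidance; thus $F_n(g)\se A_n(x)$. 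For $\supseteq$: writing $D=\bigcap_h A_n(g+nh)$, if some $x=g+nh_0$ lay in $D$ then $x\in D\se A_n(x)\se A(x)$, impossible since $x\notin A(x)$; hence $D$ avoids $g+nG$ and, by maximality of $F_n(g)$, $D\se F_n(g)$. The conventions $A_n(0)=\emptyset$ and $F_n(g)=\emptyset$ for $g\in nG$ make the identity hold in the degenerate case as well.
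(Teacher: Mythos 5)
The paper offers no proof to compare against: Section~\ref{sec: Gurevich-Schmitt QE for OAG} states explicitly that its definitions and statements are taken from \cite{Schm82a}, \cite{Schm82b} and \cite{Schm84} and that all the proofs may be found there. So your proposal has to stand on its own, and most of it does. Items 3 and 6 are correct and complete; your two-sided argument for item 7 is exactly right (the $n$-regularity of $B(x)/A_{n}(x)$ forcing some $x-ny$ into $F_{n}(g)\cap(g+nG)$ for one inclusion, and $x\in D\se A_{n}(x)\se A(x)$ being absurd for the other); and the ultrametric arguments for items 1 and 2 are sound once the slice decomposition is in place.

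The step that fails as written is the one you call the key step. With non-strict inclusions $D_1\se D_2\se C_1\se C_2$, $n$-regularity of $C_1/D_1$ and of $C_2/D_2$ does \emph{not} imply $n$-regularity of $C_2/D_1$: take $G=\Z\times\Z$ ordered lexicographically, $D_1=\{0\}$, $D_2=C_1=\{0\}\times\Z$ and $C_2=G$; both $C_1/D_1$ and $C_2/D_2$ are isomorphic to $\Z$, hence archimedean and $n$-regular, but $C_2/D_1=G$ is not $n$-regular because $G/C_1\cong\Z$ is not $n$-divisible. The merging lemma is true precisely when the overlap is proper, $D_2\subset C_1$: then for a nonzero convex subgroup $E/D_1$ of $C_2/D_1$, either $E\supsetneq D_2$, so $C_2/E$ is $n$-divisible by regularity of $C_2/D_2$, or $D_1\subset E\se D_2$, in which case $C_1/E$ is $n$-divisible by regularity of $C_1/D_1$ and $C_2/C_1$ is $n$-divisible by regularity of $C_2/D_2$ (this is where $C_1\supsetneq D_2$ enters), and extension-closure of $n$-divisibility finishes. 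You must add this strictness hypothesis; the same example also shows that distinct maximal slices need not be disjoint as intervals of convex subgroups --- they can share an endpoint --- so the organising lemma should claim they have disjoint sets of \emph{jumps}. Your applications all survive, because they only ever involve proper overlap: two slices containing the jump of the same $g\ne0$ overlap in the nonzero segment between $A(g)$ and $B(g)$, which is what the maximality and ordering arguments, and the $n$-regularity of $B_{n}(g)/A_{n}(g)$, actually require. Note also that the touching case you have inadvertently allowed is exactly the phenomenon (adjacent regular slices do not merge) that makes the spine nontrivial, so the distinction is not pedantic.

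One smaller repair: in item 4, ``$h$ lies in the slice of $g$'' is not equivalent to $A_{n}(h)\se A_{n}(g)$, since the latter also holds when the slice of $h$ lies strictly below that of $g$. The correct common reformulation of all five conditions of item 4 is that the slice of $h$ is less than or equal to the slice of $g$ in the linear order on maximal slices; with that reading your translations, and the swap-and-negate derivation of item 5, go through. With these two corrections your plan is a valid, self-contained proof of the Facts, which the paper itself only imports.
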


The language LSP of {\sl spines} contains as non-logical symbols a binary
relation symbol $\le$  and the following monadic relation symbols:
$A$, $F$, $D$ and $\alpha(p,k,m)$ for all $k,m\in\mathbb N\setminus\{0\}$,
and $p$ prime. The {\sl $n$-spine} of $G$, for $n\ge2$, is defined as the
LSP-structure with universe
$$
\{A_{n}(g)\mid g\in G\}\cup\{F_{n}(g)\mid g\in G\},
$$
and with the following interpretation of the relations:

$$
\begin{array}{rl}
C_1\le C_2        &\text{iff \ }C_1\se C_2,      \\
A(C)              &\text{iff \ $C=A_{n}(g)$ for some }g\in G,      \\
F(C)              &\text{iff \ $C=F_{n}(g)$ for some }g\in G,      \\
D(C)              &\text{iff \ $G/C$ is discrete,}      \\
\alpha(p,k,m)(C)  &\text{iff \ $C=F_{n}(g)$ for some $g\in G\setminus nG$ and}\alpha_{p,k}(\Gamma_{n}(g))\ge m,
\end{array}
$$
where  $\alpha_{p,k}(C)$
denotes the dimension of $\big(
p^{k-1}C[p]/ p^{k}C[p] \big)$ as $\F_p$-vector space if it is finite,
and $\alpha_{p,k}(C)=\infty$ otherwise. Since $n\Gamma_{n}(g)=\{0\}$,
$\Gamma_{n}(g)$ is a direct sum of finite cyclic groups of order dividing $n$(see \cite{Kap}) and
$\alpha_{p,k}(\Gamma_{n}(g))$ is the number of cyclic groups of order $p^k$
in this decomposition. Thus  $\alpha_{p,k}(\Gamma_{n}(g))=0$ if $p^k \nmid n$
and the $\alpha(p,k,m)$ are irrelevant for $p^k \nmid n$.

We will denote this structure by $\spn(G)$.

\begin{rem}\label{rem: translation of formulas}
The structure $\spn(G)$ is interpretable in $G$ for every $n\ge
2$. In particular, given any LSP-formula $\psi(z_1 \dots, z_r,
t_1, \dots ,t_s)$, there is a formula in the language of \oags\
$\vf(x_1,\dots, x_r, y_1,\dots, y_s)$ such that for every \oag\ $G$,
and $g_1,\dots, g_r, h_1,\dots, h_s\in G$,
$$
\spn(G)\models \psi(A_{n}(g_1), \dots ,A_{n}(g_r), F_{n}(h_1), \dots, F_{n}(h_s))
$$\vskip-18pt
$$\text{ iff } \ \ \ \ \ \ \   \ \ \ \ \ \ \  G\models \vf(g_1,\dots, g_r, h_1,\dots, h_s) \ \ \ \ \ \ \  \ \ \ \ \ \ \
$$
\end{rem}

Let LOG$^*$ be the definitional expansion of LOG by the following unary
predicates: $M_k$,
{\color{red}
\footnote{In \cite{Schm82a} and \cite{Schm82b} it is used the notation $M(n,k)$. We eliminate $n$ since $M(n,k)$ does not depend on $n$: if $C(g)$ is discrete then $A_{n}(g)=A(g)$, conversely if $C_{n}(g)$ is discrete and $\ov g=k\ov e$ in $C_{n}(g)$, then $A_{n}(g)=A(g)$},
}
$E_{(n,k)}$ and $D_{(p,r,i)}$ for all $n\ge2$, $r\ge1$, $0<i<r$, $k>0$
and $p$ prime.

For $g\ne0$ they are defined by:
\begin{itemize}
  \item
$M_k(g)$
iff $C_2(g)$ is discrete with $1_{C_2(g)}$ denoting its first positive element and $\cla{g}=k\cdot 1_{C_2(g)}$ in $C_2(g)$.

  \item $E_{(n,k)}(g)$ \ iff \ there exists $h\in G$ such that  $F_{n}(g)=A_{n}(h)$, $M(1)(h)$
holds and $\cla{g}=k\cla{h}$ in $\Gamma_{n}(g)$
iff there exists $h\in G$ such that
$F_{n}(g)=A_{n}(h)$, $M(1)(h)$ holds and $F_{n}(g-kh)\subset F_{n}(g)$.
  \item $D_{(p,r,i)}(g)$ \ iff \ $g\in p^rG$ or $\cla{g}\in p^i\Gamma_{p^r}(g)$
iff $g\in p^rG$ or there exists $h\in G$ such that $F_{p^r}(g-p^ih)\subset F_{p^r}(p^ih)=F_{p^r}(g)$
iff $F_{p^r}(p^{r-i}g)\subset F_{p^r}(g)$.
\end{itemize}

\begin{thm}\label{thm: B} For every {\rm LOG}-formula $\varphi(\ov x)$
there exist $n\ge2$,
a quantifier free {\rm LOG}$^*$-formula $\psi_1(\ov x)$, an LSP-formula
$\psi_0(y_1,\dots ,y_m,z_1,\dots,z_r)$, {\rm LOG}-terms $t_i(\ov x)$ for
$i=1,\dots,m$ and $s_i(\ov x)$ for $i=1,\dots,r$ such that for every \oag\ $G$ and every $\ov g\in G^\omega$
$$
G\models\varphi(\ov g)\text{ iff }
\begin{cases}
G\models\psi_1(\ov g)                                     &      \\
\spn(G)\models\psi_0(C_1,\dots,C_m,D_1,\dots,D_r),        &   \end{cases}
$$
where $C_i=A_{n}(t_i(\ov g))$ and $D_i=F_{n}(s_i(\ov g))$.
\end{thm}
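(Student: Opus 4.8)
The plan is to treat Theorem~\ref{thm: B} as a \emph{relative quantifier elimination}: every LOG-formula is to be rewritten so that all genuine quantification is pushed either into the quantifier-free LOG$^*$-layer on $G$ or into the monadic LSP-layer on the spine $\spn(G)$. As usual it suffices, by induction on the structure of $\vf$, to eliminate a \emph{single} existential quantifier $\exists x\,\chi(x,\ov y)$, where $\chi$ is quantifier-free in the combined language (LOG$^*$ together with the predicates obtained by pulling back LSP-relations along the maps $g\mapsto A_n(g)$ and $g\mapsto F_n(g)$). Putting the matrix of $\chi$ into disjunctive normal form and distributing $\exists x$ over the disjunction, I would reduce to the case where $\chi$ is a conjunction of literals; the literals not containing $x$ are carried unchanged into $\psi_1$, so the whole problem concentrates on a conjunction of literals in the single variable $x$.

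Next I would classify the $x$-literals and describe their solution set. After clearing denominators these are of three kinds: order literals $nx\le t(\ov y)$ (and their negations), congruence literals $nx\equiv t(\ov y)\bmod\De$ or $\bmod(\De+mG)$, and instances of the special LOG$^*$-predicates $M_k,E_{(n,k)},D_{(p,r,i)}$, which by their defining clauses are themselves assertions about the position of $x$ inside the quotients $C_n(x)$ and $\Ga_n(x)$. The order literals cut out an interval (a convex set) and the congruence literals cut out a coset of a subgroup of the form $\De+mG$, so the solution set of the conjunction is, when nonempty, the intersection of an interval with such a coset. The decisive reduction is therefore to decide \emph{consistency} of such an interval-plus-coset system and to express that consistency in the required two-layer form.

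The structural heart of the argument is that $G$ behaves like a generalized product indexed by its chain of convex subgroups, with ``fibers'' the regular quotients $C_n(g)$ and the finite pieces $\Ga_n(g)$. Consistency of the system is governed by two kinds of data: (a) quantifier-free relations among the parameters inside $G$, to which Bézout/Chinese-remainder compatibility of the congruences together with nonemptiness of the interval reduce; and (b) combinatorial data about the relevant jumps — which of the subgroups $A_n(t_i(\ov g))$ and $F_n(s_i(\ov g))$ coincide or are nested, whether the intervening fibers are discrete, and the invariants $\al_{p,k}(\Ga_n(g))$ recording how regular or divisible each fiber is. The monadic predicates $A,F,D,\al(p,k,m)$ of LSP are designed precisely to record exactly the data in (b); so (b) becomes an LSP-formula $\psi_0$ evaluated at the spine elements $C_i=A_n(t_i(\ov g))$ and $D_i=F_n(s_i(\ov g))$, while (a) becomes the quantifier-free LOG$^*$-formula $\psi_1$. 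Invoking the Feferman--Vaught theorem for generalized products, in the form worked out by Gurevich and Schmitt, then yields the stated decomposition together with a uniform modulus $n$ at which the $n$-spine already carries all information relevant to $\vf$.

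The hard part, I expect, will be establishing the Feferman--Vaught reduction in the \emph{ordered} setting: controlling how the linear order interacts with the congruence data simultaneously across the whole chain of convex subgroups, and verifying that the finite monadic vocabulary $A,F,D,\al(p,k,m)$ really suffices — uniformly in $G$ — to encode every solvability condition that can arise. Pinning down the single $n$ that works for a given $\vf$, and checking that the three LOG$^*$-predicates capture the residual ``local at $x$'' conditions coming from the $M_k,E_{(n,k)},D_{(p,r,i)}$ clauses, is the technical core; this is exactly the content of Schmitt's analysis in \cite{Schm82a}, \cite{Schm82b} and \cite{Schm84}, which I would follow rather than reprove from scratch.
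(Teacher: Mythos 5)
The paper itself gives no proof of Theorem~\ref{thm: B}: it is imported from Gurevich and Schmitt, and Section~\ref{sec: Gurevich-Schmitt QE for OAG} states explicitly that all proofs may be found in \cite{Schm82a}, \cite{Schm82b} and \cite{Schm84}. Your proposal ultimately defers the technical core to exactly those same sources, so it takes essentially the same approach as the paper; your preliminary sketch (relative QE by induction on formulas, reduction to consistency of an interval-plus-coset system, Feferman--Vaught-style analysis over the chain of convex subgroups) is a fair gloss on how Schmitt's argument runs, but it is the citation, not the sketch, that carries the proof in both your attempt and the paper.
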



\bibliographystyle{amsplain}
\bibliography{articles,llibresTM,llibres}

\end{document}